\definecolor{DarkOlive}{rgb}{0.1047,0.2412,0.0064}
\definecolor{FireBrick}{rgb}{0.5812,0.0074,0.0083}
\definecolor{RoyalBlue}{rgb}{0.0236,0.0894,0.6179}
\definecolor{RoyalGreen}{rgb}{0.0236,0.6179,0.0894}
\definecolor{RoyalRed}{rgb}{0.6179,0.0236,0.0894}
\definecolor{LightBlue}{rgb}{0.8544,0.9511,1.0000}
\definecolor{Black}{rgb}{0.0,0.0,0.0}
\definecolor{MidnightBlue}{rgb}{0.0035,0.0020,0.1363}
\definecolor{FireBrick3}{rgb}{0.5812,0.0074,0.0083}
\definecolor{FireBrick4}{rgb}{0.2156,0.0023,0.0035}
\definecolor{Blue2}{rgb}{0.0000,0.0000,0.8644}
\definecolor{Navy}{rgb}{0.0000,0.0000,0.1927}
\definecolor{MediumBlue}{rgb}{0.0000,0.0000,0.6179}
\def\+{\discretionary{}{}{}}
\theoremstyle{plain}
\newtheorem{Thm}{Theorem}[section]
\newtheorem{Pro}[Thm]{Proposition}
\newtheorem{La}[Thm]{Lemma}
\theoremstyle{definition}
\newtheorem{Def}[Thm]{Definition}
\newtheorem{Rem}[Thm]{Remark}
\newtheorem{Alg}[Thm]{Algorithm}
\newenvironment{Prf}{\noindent\textbf{Proof.}}{\hfill $\Box$ \medskip}
\newcommand{\mod}{ \textrm{ mod } }
\newcommand{\F}{\mathbb{F}}
\newcommand{\Z}{\mathbb{Z}}
\newcommand{\C}{\mathbb{C}}
\newcommand{\Q}{\mathbb{Q}}
\newcommand{\lcm}{{\rm lcm}}
\newcommand{\GF}[1]{\F_{#1}}
\newcommand{\GFq}{\GF{q}}
\newcommand{\GFp}{\GF{p}}
\def\ceasy{(A) }
\def\ceasycode{(B) }
\def\conditer{(C) }
\def\ceff{(D) }
\begin{document}
\title{Standard Generators of Finite Fields and their Cyclic Subgroups}
\author{Frank Lübeck
\thanks{Funded by the Deutsche Forschungsgemeinschaft (DFG, German Research                                                                                         
Foundation) – Project-ID 286237555 – TRR 195}}
\maketitle
\begin{abstract}
We define  standardized constructions  of finite fields,  and standardized
generators of (multiplicative) cyclic subgroups in these fields.

The motivation is to provide a substitute for Conway polynomials which can
be used  by various software  packages and in collections  of mathematical
data which involve finite fields.

\end{abstract}

\section{Introduction}

For each prime number $p$ and $n \in \Z_{>0}$ there exists a unique finite
field $\GF{p^n}$  of order  $p^n$, up  to isomorphism.  A standard  way to
compute with such  a field is to specify an  irreducible polynomial $f \in
\GFp[X] \cong  \Z/p\Z[X]$ of degree $n$  and to use $\GF{p^n}  = \GFp[X] /
(f)$ where each element of the field is represented by a unique polynomial
of degree $< n$. Roughly $(1/n)$-th  of all polynomials of degree $n$ over
$\GFp$ are  irreducible, so there are  many ways to realize  $\GF{p^n}$ in
this way.

The first goal  of this paper is to define  a standardized construction of
all finite fields which fulfills a list of conditions:
\begin{itemize}
\item[(A)] 
it is easy to understand knowing the standard facts about finite fields,
\item[(B)]
it is easy to (re)-implement (say, given a basic polynomial arithmetic),
\item[(C)]
it  is iterative;  that  is the  construction  of a  new  field makes  use
of  previous constructions  of  proper subfields,  and  all subfields  are
naturally and effectively embedded in the new field,
\item[(D)]
it  is reasonably  efficient in  practice when  implemented with  straight
forward algorithms.
\end{itemize}

The  condition~\conditer means  that  we construct  the algebraic  closure
$\bar\GFp$ of  $\GFp$ by  constructing all  its finite  subfields together
with natural embeddings.

The range we have in mind in~\ceff is $n$ up to a few thousand for smaller
primes, larger for very small primes, and smaller for very large primes.

Our motivation  is to provide a  reference that could be  used in computer
algebra systems, other software packages or in collections of mathematical
data  which involve  finite fields.  A  unified description  of the  field
elements significantly simplifies the exchange and reuse of data.

Before getting  to the second  goal of this  paper let us  consider three 
previous approaches in this direction.                                    

In the classical article~\cite{Steinitz}  Steinitz described (in 1910) the
theory of field extensions as it  is taught nowadays in an algebra course.
And in  {\S 16} he gives  a very explicit construction  of $\bar\GFp$ (and
any  of its  subfields) which  fulfills our  conditions~\ceasy, \ceasycode
and~\conditer  above. Here  is  a sketch.  Steinitz  introduces a  natural
numbering of all polynomials over $\GFp$. For $m \in \Z_{>1}$ let $f_m \in
\GFp[X]$ be  the irreducible polynomial  of degree $m!$ with  the smallest
number. Then $K_m = \GFp[X]/(f_m)  \cong \GF{p^{m!}}$. It remains for $K_m
\leq K_{m+1}$ to define the embedding uniquely. Steinitz maps $X+(f_m)$ to
the  zero of  $f_m$ in  $K_{m+1}$ with  the smallest  number (elements  in
$K_{m+1}$ are  represented by  unique polynomials  of degree  $< (m+1)!$).
Obviously, this definition is not very practical, because the computations
of the polynomials and embeddings can only be done for very few small $m$;
and fields  of moderate size may  be only contained in  astronomically big
$K_m$. In  this article we  will extend Steinitz' definition  of numbering
and use it in places where certain choices need to be done.

As  second approach  to define  standardized models  for finite  fields we
mention the  work of Lenstra  and de Smit~\cite{LenstraSmit,  Handbook}. A
main goal  for them was a  variant of our condition~\ceff,  namely to give
a  description  with  good,  polynomial time,  asymptotic  behaviour,  but
the  emphasis  was not  on  practical  implementation. Their  construction
fulfills~\conditer and yields a natural $\GFp$-basis for each finite field
which  contains  the  corresponding  bases of  all subfields  as  subsets,
this  defines  natural  embeddings. Our construction  will  also have this
property.  Understanding  and  implementing  their  construction  needs  a
background in  algorithmic   number theory (computations  in number fields
of characteristic zero).

Finally we  mention the approach  given by Conway  polynomials. Originally
defined by  Richard Parker  these are  currently used  and available  in a
number of  computer algebra  systems with good  support for  finite fields
like   {GAP}~\cite{GAP},   {SageMath}~\cite{SAGE},   {MAGMA}~\cite{MAGMA},
{Macaulay2}~\cite{M2} and  various more specialized programs,  for example
the  {C-MeatAxe}~\cite{MeatAxe}.  The  Conway polynomial  $C_{p,n}(X)  \in
\GFp[X]$  is  a  monic  irreducible  polynomial of  degree  $n$  which  is
primitive and respects a certain compatibility with the Conway polynomials
which  define proper  subfields. Primitive  means that  the residue  class
of  $X$  in   $\GFp[X]  /  (C_{p,n}(X))  \cong   \GF{p^n}$  generates  the
multiplicative group  of the field,  that is it  is of order  $p^n-1$. The
compatibility means that  for any divisor $m$ of $n$  the residue class of
$X^{(p^n-1)/(p^m-1)}$ is a  zero of the Conway  polynomial $C_{p,m}$. This
also  defines embeddings of  the subfields.  There are  many sets  of
polynomials  fulfilling these  conditions. To  get a  well defined  set of
polynomials  there is  a further  (recursive) condition,  namely $C_{p,n}$
must be the smallest polynomial with the mentioned properties with respect
to some ordering of polynomials (which we do not define here). We refer to
the introduction of the Modular Atlas~\cite{ModAtl} for more details.

The   construction   of   Conway    polynomials   somehow   fulfills   our
conditions \ceasy  (one has  to  show the  existence) and~\ceasycode  (one
needs to compute  roots). Condition~\conditer is fine  for the embeddings,
but the  constructions of  subfields give  additional constraints  for the
next polynomial.  Unfortunately, condition~\ceff is a  problem here. There
are two basic methods to compute a new Conway polynomial: either enumerate
all monic  polynomials of the  right degree  and check the  conditions, or
enumerate all  compatible and primitive  polynomials to find  the smallest
one.  Even for  moderate parameters  both  enumerations can  be very  time
consuming. All  systems mentioned above  use a list of  precomputed Conway
polynomials~\cite{ConwayData}  whose generation  took  many  years of  CPU
time. It  is almost  impossible to compute  any further  Conway polynomial
$C_{p,n}$ when $n>1$ is not prime.

And there is another fundamental  problem: Primitivity can only be checked
if all  prime factors  of the  order $p^n-1$  of the  multiplicative group
are  known. These  factors  are  known in  many  cases  thanks to  decades
long enormous  computational efforts~\cite{BrentFactors}, but not  for the
majority of fields we would like to cover in practice.

A  motivation for  the definition  of  Conway polynomials  comes from  the
following fact: There exist group isomorphisms, which we will call a lift,
from  the  multiplicative  group  $\bar\GFp^\times$  to  the  subgroup  of
$\C^\times$ consisting of all roots of  unity whose order is not divisible
by $p$.  A well defined such  lift is explicitly determined  by the Conway
polynomials, its restriction to $\GF{p^n}  = \GFp[X] / (C_{p,n})$ is given
by $X + (C_{p,n}) \mapsto \exp(\frac{2 \pi i}{p^n-1})$.

This explicit lift is for example often used in the modular representation
theory  of  finite  groups  where  the  definition  of  Brauer  characters
depends  on  such a  lift.  There  exists  a  large collection  of  highly
non-trivial  representation  theoretical  data   in  the  character  table
library~\cite{CTblLib},  which includes  all the  data from  the Atlas  of
Brauer characters~\cite{ModAtl}. These data are stored with respect to the
lift defined by Conway polynomials.

The  inverse of  a lift  is  also used  in  this context,  namely for  the
reduction  of characters  in characteristic~0  modulo $p$.  The choice  of
a  lift  is  equivalent  to  the  choice  of  a  $p$-modular  system.  The
Atlas~\cite[Appendix 1]{ModAtl} contains tables which describe this map on
common  irrational numbers  with respect  to  the lift  defined by  Conway
polynomials.

Another application we are interested in comes from Deligne-Lusztig theory
where elements in  a torus over a finite field  are interpreted as complex
characters of a dual torus via a lift, see~\cite[3.1]{Ca93}.

Now we can  describe the second goal  of this paper: We want  to specify a
well defined lift $\bar\GFp^\times \to  \C^\times$ for the elements in our
standardized finite subfields. We do this  by defining for $m \in \Z_{>0}$
with $\gcd(m, p) = 1$ the element $y_m$ in a finite subfield of $\bar\GFp$
which is mapped  to $\exp(\frac{2 \pi i}{m})$. Our  definition will enable
us to compute $y_m$ in practice whenever  we know the prime factors of $m$
and we can construct  a field of order $p^n$ which  contains an element of
order $m$. (Note that  $p^n-1$ may be much larger than $m$  and that we do
not need to know all prime divisors of $p^n-1$.)

\textbf{Content of this article.}
In Section 2 we recall some  basic facts about finite fields. In Section~3
we define towers of finite fields and  explain how to use them to describe
an algebraic closure of $\GFp$. We will extend Steinitz' idea to enumerate
polynomials  and  finite  field  elements.  In  Section~4  we  explain  in
more  detail how  we describe  a lift  $\bar\GFp^\times \to  \C^\times$ by
specifying  appropriate  elements in  our  standardized  fields. The  core
of  the  paper is  in  Section~5,  where  we define  explicit  irreducible
polynomials of prime  degree on which the setup in  Section~3 depends, and
in  Section~6, which  contains the  explicit construction  of standardized
elements  of given  order  (which define  a lift).  We  have also  written
a  software  package~\cite{StandardFF},  based  on  GAP~\cite{GAP},  which
implements the  constructions in this  paper and  which we used  to verify
(and  improve)  the practicality  of  our  descriptions. In  Section~7  we
collect some remarks concerning this implementation. Finally, in Section~8
we discuss  the question  of translating the  values of  Brauer characters
from one lift to another one.

\textbf{Complexity  considerations.} There  is a  lot of  literature which
is  relevant  in   the  context  of  this  article,   e.g.,  on  efficient
arithmetic  in  field  extensions, irreducibility  tests,  computation  of
minimal polynomials,  construction of irreducible  polynomials, embeddings
of   fields.   While  working   on   this   article  and   the   reference
implementation~\cite{StandardFF} we got  the impression that sophisticated
algorithms with good  asymptotic complexity do not  give vast improvements
in the  range we want  to consider  in practice, say  degrees up to  a few
thousands.  Therefore  we  do  not  include  statements  about  asymptotic
complexity  here, but  just mention  what works  sufficiently well  in our
straight forward implementation.

We  will construct  our fields  $\GF{p^n}$ by  their defining  irreducible
polynomial together  with a base change  matrix of size $n  \times n$ over
$\GFp$  (this  amounts  to  storing  $n$  elements  of  the  field)  which
reduce the  computations of embeddings  of elements into larger  fields to
matrix-vector multiplications.

It  should be  possible to  use  our standardized  fields within  existing
schemes  for  computing  in  compatible lattices  of  finite  fields,  see
for  example those  described  in~\cite{BCS},  in~\cite{DDSlTowers} or  in
\cite{DRR}.

\textbf{Acknowledgements.} First I wish to  thank Hendrik Lenstra for very
interesting  discussions about  the standardization  of finite  fields. In
particular Lenstra convinced  me \emph{not} to publish an  article which I
had prepared some years ago. In that  draft I proposed a variant of Conway
polynomials which  can be computed  in practice for all  fields $\GF{p^n}$
for which the factorization of $p^n-1$ is  known. But it is much better to
separate the  construction of  the fields and  the definition  of standard
generators of cyclic subgroups, as we do in the present article.

Furthermore  I   thank  Wilhelm  Plesken   for  sending  me   his  lecture
notes \cite{Pl15} and  for his  permission to freely  reuse his  ideas for
this article  (e.g., our definition of  Steinitz numbers, and a  sketch of
our Section~\ref{secdefcyc} can be found in these notes).

I  thank  Thomas  Breuer for  useful  discussions  about  Brauer
character  values,  and for helpful  comments on earlier versions  of this
paper and the related GAP package.

Finally, I thank Richard Parker and Claus Fieker for their comments on an
earlier version of this paper.

\section{Notation and basic facts about finite fields}\label{secnotation}

Let $p$ be a prime, we will use $q$, $q'$ for powers of $p$.

We start  with recalling some  elementary facts about finite  fields which
can be  found in  many algebra  text books,  e.g.,~\cite[V.5]{Lang}. These
will be used in the sequel without further reference.

\begin{Rem}\label{remgeneral}
\begin{enumerate}[label=(\alph*)]
\item For every prime power $q$ there exists up to isomorphism exactly one
finite field  $\GFq$ with $q$ elements.  It is the splitting  field of the
polynomial $X^q-X$ over its prime field $\GFp$.
\item Let $q'  = p^a$, $q =  p^b$. The field $\GF{q'}$ is  isomorphic to a
subfield of $\GFq$ if and only if $q$  is a power of $q'$, that is $a \mid
b$. In that case this subfield is unique and consists of the zeroes of the
polynomial $X^{q'}-X$.
\item  Let $\GF{q'}  \leq \GFq$  be a  subfield. This  field extension  is
Galois, the Galois group is cyclic  and generated by the map $\sigma_{q'}:
\GFq \to \GFq, x \mapsto x^{q'}$ (and of order $d$ if $q = (q')^d)$.
\item The multiplicative group $\GFq^\times$ is cyclic of order $q-1$.
\item The field $\GFq$ is perfect, that is every irreducible polynomial in
$\GFq[X]$ has pairwise distinct roots.
\end{enumerate}
\end{Rem}

We use the following terminology.
\begin{Def}\label{primelt}
Let $\GFq$ be a finite field with prime field $\GFp$.
\begin{enumerate}[label=(\alph*)]
\item We  call an element  $x \in \GFq$  a \emph{primitive element}  if it
generates the field over its prime field, that is $\GFq = \GFp[x]$.
\item  We call  an element  $x  \in \GFq$  a \emph{primitive  root} if  it
generates the multiplicative group $\GFq^\times$.
\end{enumerate}
\end{Def}

\begin{Rem}\label{primtensor}
\mbox{}
\begin{enumerate}[label=(\alph*)]
\item All finite fields  have a primitive root, and a  primitive root is a
primitive element.
\item Let  $m,n \in \mathbb{Z}_{>1}$  with $\gcd(m,n) =  1$ and let  $K, L
\leq \bar\GFq$ (an algebraic closure of $\GFq$) be algebraic extensions of
$\GFq$ of degree $m$ and $n$ with primitive elements $x, y$, respectively.
Then $KL := K[y]  = \GFq[x][y] = \GFq[y][x] = L[x]$ is  of degree $mn$ and
$xy$ is a primitive element, that is $KL = \GFq[xy]$.
\end{enumerate}
\end{Rem}
\begin{proof}
Assume that  $xy$ is  not a generator.  Then it is  contained in  a proper
maximal subfield $F \leq KL$ of prime index. Since $\gcd(m,n) = 1$ we have
$K \leq F$ or $L  \leq F$ and so $x \in F$ or $y \in  F$. But with $xy \in
F$ we get that both, $x \in F$ and $y \in F$, a contradiction.

Remark: the same argument shows that  $x+y$ is also a primitive element of
$KL$.
\end{proof}

We describe algebraic field  extensions via irreducible polynomials. These
are considered in the following lemmas.

\begin{La}\label{numirrpols}
Let $q$ be a prime power and $r$ be a prime.
\begin{enumerate}[label=(\alph*)]
\item There exist $(q^r-q)/r$ monic  irreducible polynomials of degree $r$
in $\GFq[X]$.
\item Assume that $r \nmid (q-1)$.  Then for any $c \in \GFq^\times$ there
are  $(q^r-q)/(r(q-1))$ monic  irreducible  polynomials of  degree $r$  in
$\GFq[X]$ with constant term $c$.
\end{enumerate}
\end{La}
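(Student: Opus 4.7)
For part (a), the plan is a double-counting of root--polynomial pairs. Since $r$ is prime, the only subfields of $\GF{q^r}$ that contain $\GFq$ are $\GFq$ and $\GF{q^r}$ themselves, so every element of $\GF{q^r} \setminus \GFq$ has minimal polynomial of degree exactly $r$ over $\GFq$. By Remark~\ref{remgeneral}(e), each monic irreducible polynomial of degree $r$ in $\GFq[X]$ has $r$ pairwise distinct roots in $\GF{q^r}$. Counting the $q^r-q$ elements of $\GF{q^r}\setminus\GFq$ as roots gives the count $(q^r-q)/r$ of such polynomials.

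For part (b), I would parametrise monic irreducibles of degree $r$ by their roots as before, but now keep track of the constant term. If $f(X)=X^r+a_{r-1}X^{r-1}+\cdots+a_0$ is the minimal polynomial of $\alpha \in \GF{q^r}\setminus\GFq$, then $a_0 = (-1)^r N(\alpha)$, where $N = N_{\GF{q^r}/\GFq}$ is the field norm, given explicitly by $N(\alpha) = \alpha^{(q^r-1)/(q-1)}$. So the task reduces to counting elements $\alpha \in \GF{q^r}\setminus\GFq$ with $N(\alpha) = (-1)^r c$, and dividing by $r$.

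The norm map $N:\GF{q^r}^\times \to \GFq^\times$ is a surjective group homomorphism: for a generator $g$ of the cyclic group $\GF{q^r}^\times$ (Remark~\ref{remgeneral}(d)), the image $N(g)=g^{(q^r-1)/(q-1)}$ has order $q-1$, hence generates $\GFq^\times$. Therefore every fibre $N^{-1}(d)$, $d\in\GFq^\times$, has size $(q^r-1)/(q-1)$. From this fibre I must subtract the elements lying in $\GFq^\times$; for $\alpha\in\GFq^\times$ the norm is just $\alpha^r$, and this is where the hypothesis enters: since $r$ is prime and $r\nmid q-1$, we have $\gcd(r,q-1)=1$, so $\alpha\mapsto\alpha^r$ is a bijection of $\GFq^\times$. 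Hence exactly one element of $\GFq^\times$ has norm $(-1)^r c$, leaving $(q^r-1)/(q-1)-1=(q^r-q)/(q-1)$ primitive elements with the required norm. Dividing by $r$ yields the desired $(q^r-q)/(r(q-1))$.

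The main obstacle, such as it is, is recognising precisely where the assumption $r\nmid q-1$ is needed: it is exactly the condition that makes the restricted norm on $\GFq^\times$ bijective, so that every value of $c$ contributes the same (subtracted) count and the fibres of the norm behave uniformly. Everything else is bookkeeping with the sign $(-1)^r$ and the counts from part~(a).
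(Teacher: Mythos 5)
Your proposal is correct and follows essentially the same route as the paper: part (a) by counting the $q^r-q$ elements of $\GF{q^r}\setminus\GFq$ in Galois orbits of size $r$, and part (b) via surjectivity of the norm map together with the observation that its restriction to $\GFq^\times$ is $x\mapsto x^r$, a bijection precisely because $r\nmid(q-1)$. Your version merely makes the fibre count $(q^r-1)/(q-1)-1=(q^r-q)/(q-1)$ explicit where the paper states the uniform-fibre conclusion directly.
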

\begin{proof}
(a)  Each monic  irreducible  polynomial  $f \in  \GFq[X]$  of degree  $r$
generates the  field $\GF{q^r} \cong \GFq[X]  / (f)$. Since $r$  is prime,
all $q^r-q$ elements of $\GF{q^r} \setminus \GFq$ generate $\GF{q^r}$ over
$\GFq$. So,  their minimal  polynomials have degree  $r$ and  $r$ distinct
roots (more  precisely, for  $x \in  \GF{q^r} \setminus  \GFq$ the  set of
conjugates $\{x,  \sigma_q(x), \ldots,  \sigma_q^{r-1}(x)\}$ has  size $r$
and these  all have  the same minimal  polynomial $\prod_{i=0}^{r-1}  (X -
\sigma_q^i(x))$).

(b)  The  norm  map   $N_{q^r/q}:  \GF{q^r}^\times  \to  \GFq^\times$,  $x
\mapsto \prod_{i=0}^{r-1} \sigma_q^i(x) = x^{1+q+q^2+\ldots+q^{r-1}}$ is a
surjective homomorphism  (the image  of a primitive  root has  order $q-1$
because $q^r-1 = (q-1)  (1+q+q^2+\ldots+q^{r-1})$). The restriction of the
norm  map to  $\GFq^\times$  is  $x \mapsto  x^r$,  hence an  automorphism
because $r \nmid (q-1)$. Therefore, every $c \in \GFq^\times$ has the same
number of preimages under the norm  map in $\GF{q^r} \setminus \GFq$. This
shows~(b) because  the constant term of  the minimal polynomial of  $x \in
\GF{q^r} \setminus \GFq$ is $(-1)^r N_{q^r/q}(x)$.
\end{proof}

\begin{La}\label{degrirred} 
\mbox{}
\begin{enumerate}[label=(\alph*)]
\item
Let $K$ be a field of characteristic $p$. For any $a \in K$ the polynomial
$X^p - X -a \in K[X]$ either has a root in $K$ or it is irreducible.
\item
Let  $r$ be  a  prime and  let $K$  be  a field.  For  any $a  \in K$  the
polynomial  $X^r  - a  \in  K[X]$  either  has a  zero  in  $K$ or  it  is
irreducible.
\end{enumerate}
\end{La}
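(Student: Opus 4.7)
The plan for both parts is the same in spirit: pick any root $\alpha$ of the polynomial in an algebraic closure $\bar K$, enumerate all the other roots in a simple way using $\alpha$ and an element of the prime field (in (a)) or an $r$-th root of unity (in (b)), then examine the coefficients of a hypothetical proper factor over $K$ to deduce that $\alpha$ (or another root) already lies in $K$.

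For part (a), I would set $f=X^p-X-a$ and note that in characteristic $p$ the identity $(\alpha+i)^p-(\alpha+i)-a=\alpha^p-\alpha-a$ for $i\in\F_p\subseteq K$ shows the $p$ distinct roots of $f$ in $\bar K$ are $\alpha,\alpha+1,\ldots,\alpha+(p-1)$. Suppose $f$ is reducible and $g\in K[X]$ is a monic irreducible factor of degree $d$ with $0<d<p$. The roots of $g$ are some subset $\alpha+i_1,\ldots,\alpha+i_d$ of the above, so the coefficient of $X^{d-1}$ in $g$ equals $-(d\alpha+\sum_j i_j)\in K$. Since $\sum_j i_j\in\F_p\subseteq K$, we get $d\alpha\in K$, and as $0<d<p$ with $p$ prime, $d$ is invertible in $K$, so $\alpha\in K$. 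Thus if $f$ has no root in $K$ it is irreducible.

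For part (b), let $\alpha\in\bar K$ be a root of $X^r-a$. If $a=0$ the statement is trivial, so assume $a\ne 0$, whence $\alpha\ne 0$. In any characteristic, the other roots have the form $\eta\alpha$ where $\eta^r=1$; in characteristic $\ne r$ these are $\alpha,\zeta\alpha,\ldots,\zeta^{r-1}\alpha$ for a primitive $r$-th root of unity $\zeta\in\bar K$, while in characteristic $r$ all roots coincide with $\alpha$. Suppose $X^r-a=g h$ with $g\in K[X]$ monic irreducible of degree $d$, $0<d<r$. The constant term of $g$ is $\pm\alpha^d\eta$ for some $r$-th root of unity $\eta$, so $\beta:=\alpha^d\eta\in K$. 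Since $r$ is prime and $0<d<r$, we can choose $u,v\in\Z$ with $ud+vr=1$. Then
\[
\beta^u \;=\; \alpha^{ud}\,\eta^u \;=\; \alpha^{1-vr}\,\eta^u \;=\; \alpha\,a^{-v}\,\eta^u,
\]
so $\alpha\eta^u = a^v\beta^u\in K$. But $(\alpha\eta^u)^r=\alpha^r\eta^{ur}=a$, exhibiting a root of $X^r-a$ in $K$.

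The only mildly subtle point is the characteristic $r$ case of part (b): there the $r$-th roots of unity collapse to $1$, but this only forces $\eta=1$, and the argument above still produces the root $\alpha\in K$. The rest is a routine bookkeeping of coefficients of a hypothetical proper factor, so I do not expect any real obstacle beyond handling this degenerate case.
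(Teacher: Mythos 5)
Your proof is correct and follows essentially the same route as the paper: in (a) the coefficient of $X^{d-1}$ of a hypothetical irreducible factor forces $d\alpha\in K$, and in (b) the constant term of such a factor together with B\'ezout on $d$ and $r$ yields an $r$-th root of $a$ in $K$. The paper phrases (b) slightly more economically via the norm $b'=b_1\cdots b_k\in K$ with $(b')^r=a^k$ instead of tracking the roots of unity explicitly, but the underlying argument is the same.
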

\begin{proof}
(a) (Artin-Schreier extensions) Let $b$ be a zero of the polynomial $X^p -
X -a$ in a  splitting field of this polynomial over  $K$. Since $\GFp \leq
K$ (the zeroes of $X^p-X$) we have $X^p  -X -a = \prod_{i \in \GFp} (X - b
-  i)$. The  minimal  polynomial of  $b$  over $K$  is  a partial  product
$\prod_{i \in I}  (X - b - i)  \in K[X]$, with $I \subseteq  \GFp$, say of
size $k$. Then  the coefficient of $X^{k-1}$, which is  of the form $kb+j$
with $j \in \GFp$, shows that $k = p$ or $b \in K$ and so $k=1$.

(b) Let  $b_1$ be  a zero of  $X^r -a$  in a splitting  field $L$  of this
polynomial over $K$. Let  $b_2, \ldots, b_k \in L$ be  the other zeroes of
the minimal polynomial of $b_1$ over $K$.  Then we have for $b' := b_1 b_2
\cdots b_k \in K$ that $(b')^r = b_1^r  b_2^r \cdots b_k^r = a^k$. If $k <
r$ then $k$  is prime to $r$  and there exist $l,  k' \in \mathbb{Z}_{>0}$
with $kl = 1+k' r$. So $(b')^{rl} = a^{kl} = a \cdot a^{k' r}$. This shows
that $a$ has an $r$-th root in $K$.
\end{proof}

The following observation will be useful for finding elements which do not
have an $r$-th root in finite fields.  For a prime $r$ and integers $t \in
\mathbb{Z}_{\geq  0}$, $m  \in \mathbb{Z}_{>0}$  we write  $r^t ||  m$, if
$r^t$ divides $m$ but $r^{t+1}$ does not divide $m$.

\begin{La}\label{rparts}
Let $q$ be a power of a prime $p$ and $r \neq p$ another prime.
\begin{enumerate}[label=(\alph*)]
\item The smallest  power $n$ such that  $q^n - 1$ is divisible  by $r$ is
the order of $q$ modulo $r$. It is a divisor of $r-1$.
\item Let $r$ be an odd divisor of  $q-1$, say $r^t || (q-1)$. For any $n$
with $r^i || n$ we have $r^{t+i} || (q^n-1)$.
\item Let $r=2$  (and so $q$ odd)  and $2^t || (q^2-1)$. For  any $n$ with
$2^i || n$ we have $2^{t+i} || (q^{2n}-1)$.
\end{enumerate}
\end{La}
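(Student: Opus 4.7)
Part (a) is immediate from Fermat/Lagrange: $r \mid q^n - 1$ iff $q^n \equiv 1 \pmod r$, so the smallest such $n$ is the order of $q$ in $(\Z/r\Z)^\times$, and this order divides $|(\Z/r\Z)^\times| = r-1$. Parts (b) and (c) are classical lifting-the-exponent computations. In both cases I would write $n = r^i m$ with $\gcd(m,r) = 1$ and argue in two independent steps. The easy step is: if $q \equiv 1 \pmod r$ and $\gcd(m,r) = 1$, then $r^t \| (q-1)$ implies $r^t \| (q^m - 1)$, because $q^m - 1 = (q-1)\sum_{j=0}^{m-1} q^j$ and the sum is $\equiv m \not\equiv 0 \pmod r$. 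The hard step is the lifting identity $v_r(u^r - 1) = v_r(u-1) + 1$ (writing $v_r$ for the $r$-adic valuation) under appropriate hypotheses on $u$.

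For (b), with $r$ odd and $r^s \| (u-1)$, $s \geq 1$, I factor $u^r - 1 = (u-1) S$ where $S = 1 + u + \cdots + u^{r-1}$, write $u = 1 + r^s w$ with $r \nmid w$, and expand by the binomial theorem:
\[
S \;\equiv\; \sum_{j=0}^{r-1}(1 + j r^s w) \;\equiv\; r + r^s w \cdot \frac{r(r-1)}{2} \pmod{r^{2s}}.
\]
Since $r$ is odd, $(r-1)/2$ is an integer, so the second summand has $v_r \geq s+1 \geq 2$; for $s \geq 1$ also $2s \geq 2$, so $S \equiv r \pmod{r^2}$ and $v_r(S) = 1$. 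Iterating on $i$ gives $v_r(q^{r^i} - 1) = t + i$, and combining with the easy step (applied to $q^{r^i}$ in place of $q$) proves (b).

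For (c), I would pass to $Q := q^2$; since $q$ is odd, $q \pm 1$ are consecutive even integers with exactly one divisible by $4$, so $v_2(Q-1) = t \geq 3$ and in particular $Q \equiv 1 \pmod 4$. The lifting step for $r = 2$ is just $u^2 - 1 = (u-1)(u+1)$, and whenever $u \equiv 1 \pmod 4$ one has $u+1 \equiv 2 \pmod 4$, giving $v_2(u^2 - 1) = v_2(u-1) + 1$. Iterating yields $v_2(Q^{2^i} - 1) = t + i$, and then applying the easy step with the odd $m = n/2^i$ finishes, since $Q^n = q^{2n}$.

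The main technical point is the lifting step, and in particular the observation that $r(r-1)/2$ acquires an extra factor of $r$ only when $r$ is odd; this is the standard obstruction to a uniform statement at $r = 2$, which is precisely why the $r = 2$ case is formulated in terms of $q^2$ rather than $q$, placing us automatically in the stable regime $u \equiv 1 \pmod 4$ where the simpler factorization $u^2 - 1 = (u-1)(u+1)$ does the work.
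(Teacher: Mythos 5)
Your proposal is correct and takes essentially the same route as the paper's proof: part (a) via the order of $q$ in $(\Z/r\Z)^\times$, and parts (b), (c) by splitting $n$ into its $r$-part and coprime part, handling the coprime factor with the geometric sum $\equiv n \pmod r$ and the $r$-power part by iterating the single-step computation ($S \equiv r \pmod{r^2}$ for odd $r$, resp.\ the factorization $(u-1)(u+1)$ with $u \equiv 1 \pmod 4$ for $r=2$). Your write-up just makes explicit the induction that the paper compresses into ``the general case follows by induction.''
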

\begin{proof}
(a) We  have $r  | (q^n-1)$  if and  only if  $q^n \equiv  1 \mod  r$; and
$(\mathbb{Z}/ r \mathbb{Z})^\times$ is cyclic of order $r-1$.

(b) Write  $q^n-1 =  (q-1)(1+q+q^2+ \ldots +q^{n-1})$.  Since $q  \equiv 1
\mod r$ we see that the second factor is $\equiv n \mod r$. This shows the
case $i = 0$, that is $ r \nmid  n$. We now assume that $n=r$ and write $q
= 1+rs$. Then  $q^j = 1 + j(rs)  + c_j \cdot r^2$ for  some integer $c_j$.
And so  $1+q+q^2+ \ldots +q^{r-1}  \equiv r + \frac{r(r-1)}{2}rs  \mod r^2
\equiv r  \mod r^2$.  This shows  $r ||  (1+q+q^2+ \ldots  +q^{r-1})$. The
general case follows by induction.

(c) The argument for odd $n$ is the same as in~(b). The case $n=2$, $q^4-1
= (q^2-1)(q^2+1)$ is clear because $q^2 \equiv 1 \mod 4$. The general case
follows by induction.
\end{proof}

In the end of this section we define a function which we will use later in
several places.  For a fixed  integer $q$ it  provides a bijection  on the
range of integers $i$  with $0 \leq i < q$ which appears  to behave like a
random number generator in our use cases.

\begin{Def}\label{SAS}
Let $q > 0$ and $i$ be integers. We call 
\[\texttt{StandardAffineShift}(q, i) \]
the integer $(m i + a) \textrm{ mod } q$, where
$m$ is the largest integer with $m \leq \frac{4}{5}q$ and $\gcd(m,q) = 1$
and $a$ is the largest integer $\leq \frac{2}{3}q$.
\end{Def}

\section{Towers of finite fields}\label{sectowers}

\begin{Def}\label{deftower}
Let  $F$  be  a  field   and  $\{X_i\mid  i=1,\ldots,l\}$  be  independent
commuting   indeterminates   over   $F$.   For  $i   =   1,\ldots,l$   let
$f_i   \in   F[X_1,\ldots,X_{i-1}][X_i]$   be  monic   in   the   variable
$X_i$.   We   assume   that   for   $i=1,\ldots,l$   the   residue   class
of    $f_i$    in    
\[F[X_1,\ldots,X_{i-1}][X_i]/(f_1,\ldots,f_{i-1})    =
(F[X_1,\ldots,X_{i-1}]/(f_1,\ldots,f_{i-1}))[X_i]\] 
is irreducible.

Then the  sequence $((X_i, f_i),  i=1,\ldots,l)$ defines a  \emph{tower of
(algebraic) field  extensions over $F$},  that is $F  = F_0 \leq  F_1 \leq
\ldots \leq F_l$ where $F_i = F[X_1,\ldots,X_{i}]/(f_1,\ldots,f_{i})$.

The degree $d_i = [F_i:F_{i-1}]$ is  the degree of the polynomial $f_i$ in
the indeterminate $X_i$.
\end{Def}

\begin{Rem}\label{remcomputetower}
Let $((X_i,  f_i), i=1,\ldots,l)$  be a  tower of  field extensions  for a
sequence  of  fields $F  =  F_0  \leq F_1  \leq  \ldots  \leq F_l$  as  in
Definition~\ref{deftower}.  Then  the  residue  class  of  any  polynomial
$\tilde g \in  F[X_1,\ldots,X_l]$ modulo $(f_1,\ldots, f_l)$  has a unique
representative $g \in  F[X_1,\ldots,X_l]$ where the degree of  $g$ in each
variable $X_i$ is smaller than $d_i$.

This  representative $g$  can be  constructed recursively:  First consider
$\tilde g$ as polynomial in $X_l$ and reduce it using the monic polynomial
$f_l$ until the degree of $\tilde g$  in $X_l$ is smaller than $d_l$. Then
proceed  in the  same  way with  $X_{l-1}, \ldots,  X_1$.  Since $f_i  \in
F[X_1,\ldots,X_i]$, the reductions of powers of $X_i$ will not enlarge the
degree in the previously considered variables $X_j$, $j>i$. (The $\{f_i\}$
form  a Gröbner  basis of  the ideal  they generate  with respect  to the
reverse  lexicographic  monomial  ordering,  and  we  just  described  the
standard reduction with this Gröbner basis.)
\end{Rem}

\begin{Def}[Tower basis] \label{DefTowerBasis}
Let $((X_i,  f_i), i=1,\ldots,l)$  be a  tower of  field extensions  for a
sequence of fields $F = F_0 \leq  F_1 \leq \ldots \leq F_l$. We define the
\emph{tower basis} of each $F_i$, $0 \leq  i \leq l$ recursively, it is an
ordered $F$-basis whose elements are  represented by the reduced monomials
in $\{X_1, \ldots, X_i\}$.

For $i=0$, $F_0 =  F$, the basis is $(1)$. Let $i >  0$ and $(b_0, \ldots,
b_m)$  be the  already  defined basis  for $F_{i-1}$.  Then we define  the
concatenation of $(b_0  X_i^j, b_1 X_i^j, \ldots, b_m X_i^j)$  for $j = 0,
1, \ldots, d_i-1$ as representatives of the tower basis of $F_i$ (where as
before $d_i$ is the degree of $f_i$ in $X_i$).
\end{Def}

Following Plesken~\cite{Pl15} we now define  a numbering of field elements
in towers over  a finite prime field $\GFp$. This  extends a definition of
Steinitz~\cite[\S 16]{Steinitz}.

\begin{Def}[Steinitz number]\label{DefSteinitz}
Let $p$ be a prime and let  $((X_i, f_i), i=1,\ldots,l)$ define a tower of
field extensions  over $\GFp = F_0  \leq \ldots \leq F_l$,  where $d_i$ is
the  degree of  $F_i$ over  $F_{i-1}$ and  $a_i =  \prod_{j=1}^i d_j$  the
degree of $F_i$ over $F_0$.

We define an injective map $s: F_l \to \Z$, such that $s(F_i) = \{m \in \Z
\mid\; 0 \leq m \leq |F_i|-1 = p^{a_i} - 1\}$ for all $i$. For $x \in F_l$
we call $s(x)$ the Steinitz number of $x$.

If $i=0$  we identify  $\GFp =  \Z/p\Z$ and define  $s(x) =  k$ when  $x =
k+p\Z$ with $0 \leq  k < p$. For $i>0$ assume that  $s$ is already defined
on  $F_{i-1}$.  Each  $x  \in  F_i  =  F_{i-1}[X_i]/(f_i)$  has  a  unique
representative $g  = c_0 +  c_1 X_i +  \ldots + c_{d_i-1}  X_i^{d_i-1} \in
F_{i-1}[X_i]$  of degree  $< d_i$.  We define  $s(x) =  \sum_{j=0}^{d_i-1}
s(c_j) q_{i-1}^j$ where $q_{i-1} = |F_{i-1}| = p^{a_{i-1}}$.

We  also  define  the  Steinitz  number   $s(f)$  of  a  polynomial  $f  =
\sum_{j=0}^k c_j  X^j \in F_l[X]$ using  the Steinitz numbers on  $F_l$ by
$s(f) = \sum_{j=0}^k s(c_j) q_l^j$.
\end{Def}

Using  Remark~\ref{remcomputetower} it  is  easy to  compute the  Steinitz
number of  an element in  the tower of  field extensions represented  by a
polynomial in $\GFp[X_i, i=1,\ldots,l]$. And  vice versa, given a Steinitz
number $m$, it is easy to write down a polynomial representing the element
$x$ with $s(x) = m$ by computing the $q_l$-adic decomposition of $m$, then
the $q_{l-1}$-adic decomposition of the coefficients and so on.

Also note  the connection to the  tower basis $(b_0, \ldots,  b_{n-1})$ of
$F_l$ defined in~\ref{DefTowerBasis}: Let $x \in F_l$ with Steinitz number
$s(x) = m$.  Consider the $p$-adic expansion $m  = m_0 + m_1 p  + \ldots +
m_{n-1}p^{n-1}$,  where  $0  \leq  m_i  <  p$  for  all  $i$.  Then  $x  =
\sum_{i=0}^{n-1} (m_i \mod p) b_i$. (So, the  $p$-adic  expansion  of  $m$
yields the coefficients of $x$ with respect to the tower basis.)
\bigskip

Let  $p$ be  a prime,  $n \in  \mathbb{Z}_{>0}$ and  $n =  r_1^{l_1}\cdots
r_k^{l_k}$ be the prime factorization of $n$ with $r_1 < \ldots < r_k$.

We want to  describe and construct the finite field  $\GF{p^n}$. Since for
every  divisor $m  \mid  n$  there is  a  unique  subfield $\GF{p^m}  \leq
\GF{p^n}$  there  exists  a  unique sequence  of  field  extensions  $\GFp
\leq  \GF{p^{r_1}}  \leq  \ldots  \leq \GF{p^{n/r_k}}  \leq  \GF{p^n}$  of
non-decreasing prime degrees.

Let $q_i := p^{(r_i^{l_i})}$, $i  = 1,\ldots,k$. Then the field extensions
$\GFp \leq \GF{q_i}$ are of  prime power degree $r_i^{l_i}$ and $\GF{p^n}$
is the compositum  $\GF{p^n} = \GF{q_1}\cdots \GF{q_k}$,  where any factor
only intersects with the product of the others in the prime field $\GF{p}$
(see  Remark~\ref{primtensor}(b)). This  shows that  $\GF{p^n}$ can  be
constructed using extensions of prime power degree $r^l$ of $\GFp$.

So let $r$ be  a prime (equal to $p$ or not)  and $l \in \mathbb{Z}_{>0}$.
We will construct the field  $\GF{p^{(r^l)}}$ via a sequence of extensions
of degree $r$:
\[\GFp \leq \GF{p^r} = \GFp[x_{r,1}] \leq \ldots \leq \GF{p^{(r^l)}} =
\GF{p^{(r^{l-1})}}[x_{r,l}]. \]
For   this   we   need   to  construct   recursively   monic   irreducible
polynomials  
\[\bar{f}_{r,i}(X_{r,i})   \in  \GF{p^{(r^{i-1})}}[X_{r,i}]  =
\GFp[x_{r,1},\ldots, x_{r,i-1}][X_{r,i}]\] 
of    degree    $r$    (for    $i=1,\ldots,    l$)    where    we    write
$x_{r,i}$     for     the     residue    class     of     $X_{r,i}$     in
$\GFp[x_{r,1},\ldots,x_{r,i-1}][X_{r,i}]/(\bar{f}_{r,i})$.  It   is  clear
that $x_{r,i}$ is of degree $r^i$ over the prime field $\GFp$.

An  $\GFp$-basis  of   $\GFp[x_{r,1},\ldots,x_{r,i-1}]$  consists  of  the
elements  
\[\{x_{r,1}^{j_1}\cdots  x_{r,i-1}^{j_{i-1}}  \mid  0  \leq  j_1,
\ldots,  j_{i-1} \leq  r-1\}.\] 
Changing    such     basis    elements    to     representing    monomials
$X_{r,1}^{j_1}\cdots   X_{r,i-1}^{j_{i-1}}$   in   the   coefficients   of
$\bar{f}_{r,i}$  we get  a  polynomial  $f_{r,i} \in  \GFp[X_{r,1},\ldots,
X_{r,i}]$.

Then   we   have    $\GF{p^{(r^i)}}   =   \GFp[X_{r,1},\ldots,X_{r,i}]   /
(f_{r,1},\ldots,f_{r,i})$, that is the  sequence \[ ((X_{r,i}, f_{r,i}), i
= 1, \ldots, l), \] defines a  tower of field extensions over $\GFp$, each
of degree $d_i = r$, as in Definition~\ref{deftower}.

\subsection{Construction of an algebraic closure $\bar\GFp$}\label{secbarFp}

If we define polynomials $f_{r,i}$ as above  for all primes $r$ and $i \in
\Z_{>0}$ we get an explicit description of an algebraic closure $\bar\GFp$
of $\GFp$, because each element $\bar f \in \bar\GFp$ is contained in some
finite subfield $\GF{p^n}$.

\begin{Rem}\label{rembargfp}
This construction has a number of nice properties:
\begin{itemize}
\item[(a)]
Each $\bar f  \in \GF{p^n} \subset \bar \GFp$ has  a unique polynomial \[f
\in  \GFp[X_{r,i}\mid\;  r  \textrm{  prime,  }i  \in  \Z_{>0},  r^i  \mid
n],\]  which has  degree $<  r$ in  each variable  $X_{r,i}$, as  standard
representative.
\item[(b)] The representation in~(a) does  not depend on $n$. The smallest
possible $n$ has $r$-part $r^i$ if $X_{r,i}$ occurs in a non-zero monomial
of $f$, but not $X_{r,j}$ with $j > i$.
\item[(c)]  Each  element  $x  \in  \bar\GFp$ can  be  identified  by  its
\emph{Steinitz  pair}  $(n, m)$  where  $n$  is  the  degree of  $x$  over
$\GFp$ and  $m$ is  the Steinitz  number of $x$  as element  of $\GF{p^n}$
(see~\ref{DefSteinitz}, note  that we  use the tower  which has  the prime
divisors of $n$ in non-decreasing order as relative degrees).
\item[(d)]  In  particular,  the  representatives  in~(a)  yield  explicit
natural embeddings  $\GF{p^m} \hookrightarrow  \GF{p^n}$ whenever  $m \mid
n$. In that case the monomials representing the tower basis of  $\GF{p^m}$
are a subset of the monomials representing the tower basis of $\GF{p^n}$.
\item[(e)] If $n  = r_1^{l_1} \cdots r_k^{l_k}$, then  $x_n := x_{r_1,l_1}
\cdots  x_{r_k,l_k}$ is  a primitive  element of  $\GF{p^n} =  \GFp[x_n]$,
see~\ref{primtensor}(b).
\item[(f)] We can  perform arithmetic in $\bar \GFp$: Let  $\bar f, \bar g
\in \bar\GFp$ with  standard representatives $f, g$ as  in~(a). Then $f\pm
g$ is  the standard  representative of  $\bar f  \pm \bar  g$. We  get the
standard representative of $\bar f \bar g$  from $f g$ by reducing it with
the  $f_{r,  i}$, starting  with  the  lexicographically largest  $(r,i)$,
see~\ref{remcomputetower}.

If  $\bar f  \in \GF{p^n}^\times$  its inverse  can be  computed as  $\bar
f^{-1}  = \bar  f^{p^n-2}$ (via  repeated  squaring). In  large fields  it
is  more  efficient  to  use  the extended  Euclidean  algorithm  for  the
representative  $f$ and  $f_{r,i}$, when  $X_{r,i}$ is  the variable  with
lexicographically  largest  $(r,i)$  occuring  in $f$  (this  may  involve
further inversions in the coefficient field).
\end{itemize}
\end{Rem}

\section{Embedding $\bar\GFp^\times$ into $\mathbb{C}^\times$}
\label{secembedding}

\begin{Pro}
Let $p$ be a  prime. Let $\bar\GFp$ be an algebraic  closure of the finite
prime  field $\GFp$  and $\bar\GFp^\times$  its multiplicative  group. Let
$\Q_{p'}$ be the  additive group of rational numbers  whose denominator is
not divisible by  $p$, the additive group of $\Z$  is a subgroup. Finally,
let $\mu_{p'}  \leq \C^\times$ be the  subgroup of complex roots  of unity
whose order is not divisible by $p$.

\begin{itemize}
\item[(a)] The  exponential map 
\[ \Q^+ \to  \C^\times, \quad \frac{r}{s} \mapsto  e^{2\pi i \frac{r}{s}},
\]
induces an isomorphism $e: \Q_{p'}/\Z \to \mu_{p'}$.
\item[(b)]
There exists an isomorphism
\[ \ell: \bar\GFp^\times \to \Q_{p'}/\Z.\]
\end{itemize}
\end{Pro}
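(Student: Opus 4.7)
The plan is to handle (a) by unpacking the definitions, and to reduce (b) to a primary-component comparison of two torsion abelian groups that turn out to be abstractly isomorphic Prüfer-style constructions.

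For (a), I would start from the well-known fact that $x \mapsto e^{2\pi i x}$ is a homomorphism $\Q \to \C^\times$ with kernel $\Z$. Restricting to $\Q_{p'}$ gives a homomorphism with the same kernel $\Z \subseteq \Q_{p'}$, hence an injection $\bar e: \Q_{p'}/\Z \hookrightarrow \C^\times$. If $r/s \in \Q_{p'}$ with $\gcd(s,p)=1$, then $\bar e(r/s)$ is a root of unity of order dividing $s$, which is coprime to $p$, so the image lies in $\mu_{p'}$. Conversely, any $\zeta \in \mu_{p'}$ has order $m$ coprime to $p$ and equals $e^{2\pi i k/m}$ for some integer $k$, with $k/m \in \Q_{p'}$; this gives surjectivity.

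For (b), I would work prime-by-prime on the $\ell$-primary torsion of each side. On the analytic side, $\Q_{p'}/\Z$ decomposes as $\bigoplus_{\ell\neq p}\Z[1/\ell]/\Z$, with each summand a Prüfer $\ell$-group. On the algebraic side, I would first observe that $\bar\GFp^\times$ has trivial $p$-torsion, because $X^{p^k}-1=(X-1)^{p^k}$ in characteristic $p$, and then show that for $\ell\neq p$ the subgroup $\mu_{\ell^k}(\bar\GFp)$ of $\ell^k$-th roots of unity is cyclic of order exactly $\ell^k$: the polynomial $X^{\ell^k}-1$ is separable (derivative coprime to it in characteristic $p\neq\ell$), so has $\ell^k$ roots, and these roots lie in some finite subfield whose multiplicative group is cyclic by Remark~\ref{remgeneral}(d). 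The direct limit $\bigcup_k \mu_{\ell^k}(\bar\GFp)$ is then a Prüfer $\ell$-group, matching the corresponding summand of $\Q_{p'}/\Z$.

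To build the isomorphism $\ell$ itself, for each prime $\ell\neq p$ I would choose a compatible system $\zeta_{\ell,1},\zeta_{\ell,2},\ldots\in\bar\GFp$ where $\zeta_{\ell,k}$ is a primitive $\ell^k$-th root of unity and $\zeta_{\ell,k+1}^{\ell}=\zeta_{\ell,k}$; such a system exists by recursively picking $\ell$-th roots, since $X^\ell-\zeta_{\ell,k}$ always splits in $\bar\GFp$. Sending $\zeta_{\ell,k}\mapsto 1/\ell^k+\Z$ then extends to an isomorphism of Prüfer groups, and summing over $\ell$ yields the desired $\ell:\bar\GFp^\times\to\Q_{p'}/\Z$. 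The main obstacle is not a computational one but a conceptual one: the isomorphism is highly non-unique, depending on a countable family of compatible root choices, and pinning down a canonical such choice is precisely the task that motivates the later sections of the paper.
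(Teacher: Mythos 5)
Your proof is correct, but it takes a genuinely different route from the paper's. The paper proves (b) by induction along the Steinitz tower $K_m=\GF{p^{m!}}$: it picks a primitive root $x_1$ of $K_1^\times$, then at each step replaces a primitive root $y$ of $K_{m+1}^\times$ by a power $x_{m+1}=y^k$ whose norm-type power $x_{m+1}^{(p^{(m+1)!}-1)/(p^{m!}-1)}$ equals $x_m$, and sends $x_m\mapsto 1/(p^{m!}-1)\bmod\Z$; injectivity is immediate from cyclicity of each $K_m^\times$, and surjectivity follows from $a\mid p^{j}-1\mid p^{j!}-1$ for suitable $j$ when $\gcd(a,p)=1$. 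You instead invoke the primary decomposition of the torsion groups on both sides and match Pr\"ufer $\ell$-groups prime by prime via compatible systems of $\ell^k$-th roots of unity. Both arguments are complete (your separability and cyclicity observations correctly identify each $\ell$-primary component of $\bar\GFp^\times$ as $\Z(\ell^\infty)$, and the $p$-primary part is indeed trivial); the paper's version is more self-contained, needing only the cyclic structure of finite fields and one compatible choice per level of a single tower, whereas yours requires the primary decomposition theorem but has the advantage of mirroring exactly how the paper later pins down its standard lift --- namely by fixing generators $y_{n,r}$ of the Sylow $r$-subgroups separately for each prime $r$ (Remark~\ref{defcyclic} and Algorithm~\ref{standardcyc}). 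Your closing remark that the whole difficulty lies in making the non-canonical root choices canonical is precisely the motivation for Sections~5 and~6.
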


\begin{Prf}
Part~(a) is clear.

For part~(b) we show the existence of  such a map by induction. Let $K_m =
\GF{p^{m!}}$, then  $K_m \leq K_k$ if  $m \leq k$ and  $\bar\GFp = \cup_{m
\in  \Z_{>0}} K_m$.  For $m=1$  the multiplicative  group $K_1^\times$  is
cyclic of order  $p-1$ and generated by a primitive  root $x_1$. We define
$\ell$ on $K_1^\times$  by $x_1 \mapsto 1/(p-1) \mod \Z$.  Now assume that
$\ell$ is defined on $K_m^\times$ by mapping a primitive root $x_m \mapsto
1/(p^{m!}-1)$.  Let $y$  be  a primitive  root of  $K_{m+1}$.  Then $y'  =
y^{(p^{(m+1)!}-1)/(p^{m!}-1)}$  is a  primitive  root of  $K_m$ and  there
exists a  $k \in \Z$  with $(y')^k  = x_m$. Then  set $x_{m+1} =  y^k$ and
define  $\ell$ on  $K_{m+1}^\times$ by  $x_{m+1} \mapsto  1/(p^{(m+1)!}-1)
\mod \Z$, this extends the map previously defined on $K_m$.

The injectivity of $\ell$ is clear by  construction. Let $a \in \Z$ be not
divisible by $p$, then $p$ is prime to  $a$ and there is a $j \in \Z_{>0}$
with $p^j \equiv  1 \mod a$, that  is $a \mid (p^j -  1) \mid (p^{j!}-1)$.
This shows  that $1/a \mod  \Z$ is  in the image  of $\ell$, so  $\ell$ is
surjective.
\end{Prf}

We are  interested in  an explicit  computable description  of such  a map
$\ell$  and  so  the  induced  lift $e  \circ  \ell:  \bar\GFp^\times  \to
\C^\times$ as  in the proposition in  terms of an explicit  description of
$\bar\GFp$.

\begin{Rem}\label{defcyclic}
The cyclic group  $\GF{p^n}^\times$ is the direct product  of its (cyclic)
Sylow subgroups. A homomorphism  $\ell: \GF{p^n}^\times \to \Q_{p'}/\Z$ is
uniquely  determined by  specifying  an arbitrary  generator $y_{n,r}$  of
the  Sylow $r$-subgroup  of $\GF{p^n}^\times$  such that  $\ell(y_{n,r}) =
\frac{1}{r^t} \mod \Z$ for each prime $r$ with $r^t || (p^n-1)$.

Having fixed  these $y_{n,r}$ it is  easy to compute for  each divisor $m$
of  $p^n-1$  the  element  $y_m \in  \GF{p^n}^\times$  with  $\ell(y_m)  =
\frac{1}{m}$, provided the prime factorization of $m$ is known.

For example, for $m = p^n-1 = \prod_r r^{t_r}$ the element $y'_m = \prod_r
y_{n,r}$ (product over all prime divisors  $r$ of $m$) is a primitive root
with $\ell(y'_m) = \sum_r \frac{1}{r^{t_r}} \mod \Z = \frac{a}{m} \mod \Z$
for some $a  \in \Z$. Let $b \in  \Z$ be the inverse of  $a \mod (p^n-1)$.
Then $y_m :=  (y'_m)^b$ is the element with $\ell(y_m)  = \frac{1}{m} \mod
\Z$.

Note  that in  the  analogous  construction for  arbitrary  divisors $m  |
(p^n-1)$ only  appropriate powers of  $y_{r,n}$ for prime divisors  $r$ of
$m$ are needed.
\end{Rem}

\section{Definition of standard extensions of prime degree}
\label{secstdgf}

As before let $p$ be a fixed prime. We now define polynomials $f_{r,i} \in
\GFp[X_{r,j}\mid\;  1 \leq  j  \leq i]$  for  each prime  $r$  and $i  \in
\Z_{>0}$  as  explained  in Section~\ref{secbarFp}.  We  distinguish  four
cases:
\begin{itemize}
\item $r = p$, 
\item  $r \mid (p-1)$ and in case $r = 2$ also $4 \mid (p-1)$,
\item $r = 2$ and $4 \mid (p+1)$
\item other $r$ (that is $r \neq 2, p$ and $r \nmid (p-1)$). 
\end{itemize}

In several places we make use of the function \texttt{StandardAffineShift}
defined in~\ref{SAS} to describe pseudo-random field elements or
polynomials.

\subsection{Case $r = p$} \label{secartinschreier}

In this case we use Artin-Schreier polynomials.

\begin{Pro}\label{defA} Let
\[
\begin{array}{rcl}
f_{p,1} & := & X_{p,1}^p - X_{p,1} - 1\\
f_{p,i} & := & X_{p,i}^p - X_{p,i} - (\prod_{j=1}^{i-1} X_{p,j})^{p-1}
\textrm{ for } i \geq 2.
\end{array}
\]
For each $l \in \Z_{>0}$ the  sequence $((X_{p,i}, f_{p,i}), 1 \leq i \leq
l)$ defines a tower of field extensions of degree $p$ over $\GFp$.
\end{Pro}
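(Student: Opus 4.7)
The plan is to verify, for each $i$ from $1$ to $l$, that $f_{p,i}$ reduces to an irreducible polynomial of degree $p$ in the variable $X_{p,i}$ over the field $F_{i-1} = \GFp[x_{p,1},\ldots,x_{p,i-1}]$ constructed at the previous stage. Since $f_{p,i}$ is an Artin–Schreier polynomial $X_{p,i}^p - X_{p,i} - a_i$ with $a_1 = 1$ and $a_i = (x_{p,1}\cdots x_{p,i-1})^{p-1}$ for $i \ge 2$, Lemma~\ref{degrirred}(a) reduces the task to showing that $a_i$ is \emph{not} of the form $b^p - b$ for any $b \in F_{i-1}$.

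First I would record the Artin–Schreier trace criterion: the $\GFp$-linear map $\wp: F_{i-1} \to F_{i-1}$, $x \mapsto x^p - x$, has kernel $\GFp$ and image contained in the kernel of $\mathrm{Tr}_{F_{i-1}/\GFp}$ (because $\mathrm{Tr}$ commutes with Frobenius); comparing $\GFp$-dimensions shows $\mathrm{Im}(\wp) = \ker(\mathrm{Tr}_{F_{i-1}/\GFp})$. Hence $f_{p,i}$ has a root in $F_{i-1}$ iff $\mathrm{Tr}_{F_{i-1}/\GFp}(a_i) = 0$, and it is enough to show this trace is nonzero.

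The base case $i=1$ is immediate: $\mathrm{Tr}_{\GFp/\GFp}(1) = 1 \ne 0$. For $i \ge 2$ I would compute the trace step by step up the tower. The key identity is
\[ \mathrm{Tr}_{F_j/F_{j-1}}(x_{p,j}^{p-1}) \;=\; \sum_{k=0}^{p-1} (x_{p,j}+k)^{p-1} \;=\; -1, \]
using that the Galois conjugates of $x_{p,j}$ over $F_{j-1}$ are the roots $x_{p,j}+k$ ($k \in \GFp$) of $f_{p,j}$, together with the elementary fact that $\sum_{k=0}^{p-1} k^{p-1-m} \equiv 0 \pmod p$ except at $m=0$, where it equals $-1$ (so only the $m=0$ term of the binomial expansion survives). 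Factoring $\pi_j^{p-1} = \pi_{j-1}^{p-1} x_{p,j}^{p-1}$ (where $\pi_j := x_{p,1}\cdots x_{p,j}$) and pulling the $F_{j-1}$-scalar $\pi_{j-1}^{p-1}$ out of the trace gives the recursion $\mathrm{Tr}_{F_j/F_{j-1}}(\pi_j^{p-1}) = -\pi_{j-1}^{p-1}$. Iterating with the tower law for traces yields
\[ \mathrm{Tr}_{F_{i-1}/\GFp}(\pi_{i-1}^{p-1}) \;=\; (-1)^{i-1} \;\neq\; 0, \]
which gives the required irreducibility at level $i$.

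The main obstacle is really only a bookkeeping one: making sure that $\pi_{j-1}^{p-1}$ lies in $F_{j-1}$ so that it can be pulled out of $\mathrm{Tr}_{F_j/F_{j-1}}$, and organizing the recursion so that the inductive hypothesis supplies a field $F_{i-1}$ of the correct degree $p^{i-1}$ at each step. Once the trace identity $\sum_{k=0}^{p-1}(X+k)^{p-1}=-1$ in $\GFp[X]$ is in hand, everything else is a clean induction using transitivity of the trace and the Artin–Schreier criterion.
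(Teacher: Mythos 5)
Your argument is correct, but it takes a genuinely different route from the paper's. The paper also reduces to showing that $a_i=(\prod_{j<i}x_{p,j})^{p-1}$ is not of the form $b^p-b$ in $F_{i-1}$ (via Lemma~\ref{degrirred}(a)), but it does so by a purely local computation at the top step of the tower: writing $y=x_{p,i-1}$, $a=(\prod_{j\le i-2}x_{p,j})^{p-1}$ and $x=c_0+c_1y+\dots+c_{p-1}y^{p-1}$ with $c_k\in F_{i-2}$, it uses $y^p=y+a$ to extract the coefficient of $y^{p-1}$ in $x^p-x-ay^{p-1}$, which is $c_{p-1}^p-c_{p-1}-a$ and is nonzero by the induction hypothesis one level down. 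That argument needs nothing beyond the basis representation and is a one-line induction. You instead invoke the Artin--Schreier criterion $\mathrm{Im}(\wp)=\ker(\mathrm{Tr}_{F_{i-1}/\GFp})$ and push the absolute trace all the way down the tower using $\mathrm{Tr}_{F_j/F_{j-1}}(x_{p,j}^{p-1})=\sum_{k\in\GFp}(x_{p,j}+k)^{p-1}=-1$, obtaining the explicit value $\mathrm{Tr}_{F_{i-1}/\GFp}(a_i)=(-1)^{i-1}$. This costs you two extra ingredients (the identification of the image of $\wp$ with the trace kernel, and the power-sum evaluation $\sum_k k^s$), but it buys a sharper conclusion --- the exact trace of the Artin--Schreier constant at every level --- and is the standard ``additive Hilbert 90'' treatment of such towers. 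Your bookkeeping concerns are easily discharged: $\pi_{j-1}^{p-1}\in F_{j-1}$ by construction, and the conjugates of $x_{p,j}$ over $F_{j-1}$ are indeed $x_{p,j}+k$ once irreducibility at level $j$ is part of the induction hypothesis. Both proofs are complete and correct.
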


\begin{proof}
We use Lemma~\ref{degrirred}(a) and induction.  It is clear that $f_{p,1}$
has no zero in $\GFp$, so $f_{p,1}$ is irreducible over $\GFp$.

We write $I_i$  for the ideal generated by $\{f_{p,j}\mid\;  1 \leq j \leq
i\}$  and  $x_{p,i}$ for  the  residue  class of  $X_{p,i}$  in  $ F_i  :=
\GFp[X_{p,j}\mid\; 1 \leq j \leq i] / I_i$.

Now assume  $i > 1$ and  that we have shown  for $1 \leq j  \leq i-1$ that
$f_{p,j}$ modulo $I_{j-1}$ is irreducible  over $F_{j-1}$. We need to show
that $f_{p,i}$ modulo $I_{i-1}$ has no zero in $F_{i-1}$.

Set $y := x_{p,i-1}$ and  $a := (\prod_{j=1}^{i-2}x_{p,j})^{p-1}$, so that
$y^p -  y -  a =  0$ and  $f_{p,i} \mod  I_{i-1} =  X_{p,i}^p -  X_{p,i} -
ay^{p-1}$. Each element of $F_{i-1}$ has the form
\[ x = c_0 + c_1 y + \ldots + c_{p-1} y^{p-1}, \quad 
\textrm{ with unique }c_k \in F_{i-2}. \]

We evaluate $f_{p,i} \mod I_{i-1}$ at $x$,
\[ z = x^p - x - ay^{p-1} \in F_{i-1}, \]
and use  $y^p = y  + a$ to  see that the  coefficient of $y^{p-1}$  in $z$
is  $c_{p-1}^p  - c_{p-1}  -  a$.  So $z  \neq  0$  because by  assumption
$X_{p,i-1}^p  -  X_{p,i-1}  -a  \in F_{i-2}[X_{p,i-1}]$  has  no  zero  in
$F_{i-2}$.
\end{proof}

\subsection{Case $r \mid (p-1)$ and $4 \mid (p-1)$ if $r=2$}
\label{rinpminus1}

The assumption means that $\GFp^\times$ contains primitive $r$-th roots of
unity.  Equivalently,  $\GFp^\times$  contains  an element  $a$  which  is
no  $r$-th  power. (Note  that  $x  \mapsto  x^r$  is an  automorphism  of
$\GFp^\times$ if $r  \nmid (p-1)$.) Once we have specified  such an $a \in
\GFp^\times$ it  is again very easy  to define polynomials we  are looking
for.

An  element  $a  \in \GFp^\times$  is  an  $r$-th  power  if and  only  if
$a^{(p-1)/r} = 1$.

\begin{Alg}[non $r$-th power]\label{nonr}
Input: $F$, $r$,

where $F$  is a  finite field  whose elements  are identified  by Steinitz
numbers, and $r$ is a prime number dividing $|F^\times|$.

Output: An element $a \in F$ that is not an $r$-th power in $F$.

\begin{itemize}
\item[(a)] Initialize $i = 0$, $a = 0 \in F$.
\item[(b)] While $a = 0$ or $a^{(|F|-1)/r} = 1$ do:

\mbox{}\quad\quad $i = i+1$ and set $a \in F$ to the element with Steinitz
number\\
\mbox{}\quad\quad \texttt{StandardAffineShift}$(|F|, i)$ (see~\ref{SAS}).
\item[(c)] Return $a$.
\end{itemize}

\end{Alg}

\noindent\textbf{Proof     and      remark.}     Since      $i     \mapsto
\texttt{StandardAffineShift}(|F|, i)$ is a  bijection on the integers from
$0$ to $|F|-1$ the element $a$ will run through all elements of $F$ and so
the algorithm will find an element that is no $r$-th power.

In fact  the algorithm will  finish very  quickly in practice  because the
proportion of $r$-th powers in $F^\times$  is only $1/r$, and the order in
which we  run through $F$  looks like a  random order. In  experiments the
performance  was not  different  from using  a  more sophisticated  random
number generator instead of \texttt{StandardAffineShift}.

When $F$  is a prime  field, then running  through $F$ by  Steinitz number
would also work well, but for  non-prime fields the small Steinitz numbers
all refer  to elements in a  proper subfield and  then it can take  a long
time to find the first element which is not an $r$-th power.
{\mbox{}\hfill $\Box$} \medskip 

\begin{Pro}\label{defB}
For given primes $p$  and $r$ with $r \mid (p-1)$ let $a  \in \GFp$ be the
element that  is not  an $r$-th power  found by  Algorithm~\ref{nonr} with
inputs $\GFp$ and $r$.

We define polynomials in $\GFp[X_{r,i} \mid\: i \in \mathbb{Z}_{>0}]$:
\[
\begin{array}{rcl}
f_{r,1} & := & X_{r,1}^r - a\\
f_{r,i} & := & X_{r,i}^r - X_{r,i-1}
\textrm{ for } i \geq 2.
\end{array}
\]
For each $l \in \Z_{>0}$ the  sequence $((X_{r,i}, f_{r,i}), 1 \leq i \leq
l)$ defines a tower of field extensions of degree $r$ over $\GFp$.
\end{Pro}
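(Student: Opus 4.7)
The plan is to use Lemma~\ref{degrirred}(b) inductively: a polynomial of the form $X^r - c \in F_{i-1}[X]$ is either irreducible or has a root in $F_{i-1}$, so it suffices to show at each level that the constant $c$ is not an $r$-th power in the previously constructed field. For $i = 1$ the constant is $a$, which was chosen via Algorithm~\ref{nonr} precisely so that $a$ has no $r$-th root in $\GFp$, giving the base case.

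For the inductive step $i \geq 2$, assume the tower has been built up through $F_{i-1}$, a field of size $q := p^{r^{i-1}}$, and let $x_{r,j}$ denote the residue class of $X_{r,j}$. Iterating the relations $x_{r,j}^r = x_{r,j-1}$ (for $2 \leq j \leq i-1$) together with $x_{r,1}^r = a$ gives $x_{r,i-1}^{r^{i-1}} = a$. Since $F_{i-1}^\times$ is cyclic of order $q-1$ and $r \mid (q-1)$, the element $x_{r,i-1}$ is an $r$-th power if and only if $x_{r,i-1}^{(q-1)/r} = 1$. The strategy is to rewrite the exponent so that $x_{r,i-1}^{(q-1)/r}$ becomes an explicit power of $a$ in $\GFp^\times$, and then check nontriviality there using the non-$r$-th-power property of $a$.

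Let $t$ be determined by $r^t || (p-1)$; the hypothesis forces $t \geq 1$ in all cases, and $t \geq 2$ when $r = 2$. Applying Lemma~\ref{rparts}(b) for odd $r$, and Lemma~\ref{rparts}(c) combined with the observation $2^{t+1} || (p^2-1)$ (which uses $4 \mid (p-1)$, hence $2 || (p+1)$) for $r = 2$, one obtains the uniform statement $r^{t+i-1} || (q-1)$. Writing $(q-1)/r = r^{t+i-2} M$ with $\gcd(M, r) = 1$, this exponent is divisible by $r^{i-1}$ because $t \geq 1$, so substituting $x_{r,i-1}^{r^{i-1}} = a$ gives $x_{r,i-1}^{(q-1)/r} = a^{r^{t-1} M}$. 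Since $a$ is not an $r$-th power in $\GFp^\times$, its order has $r$-part exactly $r^t$, whereas $r^{t-1} M$ has $r$-part only $r^{t-1}$; hence the order of $a$ cannot divide this exponent and $a^{r^{t-1} M} \neq 1$, completing the induction. The main care is tracking the $r$-adic valuations uniformly across the odd and $r=2$ cases; the hypothesis $4 \mid (p-1)$ for $r = 2$ is precisely what aligns the $r=2$ bookkeeping with the odd case.
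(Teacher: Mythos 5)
Your proposal is correct and follows essentially the same route as the paper: reduce to Lemma~\ref{degrirred}(b), then use the valuation bookkeeping of Lemma~\ref{rparts}(b) (and (c) together with $2\,\|\,(p+1)$ for $r=2$, $4\mid(p-1)$) to show that the constant term at each level is not an $r$-th power in the field below. The only cosmetic difference is that you verify this by raising $x_{r,i-1}$ to the exponent $(q-1)/r$ and reducing to a power of $a$ in $\GFp^\times$, whereas the paper tracks the $r$-part of the order of $x_{r,i}$ directly and compares it with the $r$-part of $|F_i^\times|$; both hinge on the same facts.
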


\begin{proof}
First assume that $r$ is odd.

The polynomial $f_{r,1}$ has no root  in $\GFp$ by construction of $a$ and
so it is irreducible by Lemma~\ref{degrirred}(b).

Now let $I_i$ be the ideal generated by $f_{r,j}$ with $j \leq i$ and $F_i
= \GFp[X_{r,j}\mid\; j \leq i] /  I_i$. Let $x_{r,i}$ be the residue class
of  $X_{r,i}$  in $F_i$.  Let  $r^t  || (p-1)$,  so  the  $r$-part of  the
order  of $a$  is $r^t$.  By  construction the  $r$-part of  the order  of
$x_{r,i}$  is  $r^{t+i}$ for  $i  \geq  1$. And  by  Lemma~\ref{rparts}(b)
we  have  $r^{t+i} ||  \;  |F_i-1|$.  This  shows  by induction  that  all
polynomials $f_{r,i} \textrm{ modulo } I_{i-1} = X_{r,i}^r - x_{r,i-1} \in
F_{i-1}[X_{r,i}]$  have no  zero in  $F_{i-1}$ and  so are  irreducible by
Lemma~\ref{degrirred}(b).

In the case $r=2$ we assume $4 |  (p-1)$ and so $2 || (p+1)$. In this case
Lemma~\ref{rparts}(c) shows that  the statement in~\ref{rparts}(b) remains
correct for $r=2$. So, our proof also holds in this case.
\end{proof}

\subsection{Case $r=2$ and $4 | (p+1)$} \label{secr2}

In this case we have $2 || (p-1)$  and $-1 \in \GFp$ has no square root in
$\GFp$,  that  is  $X_{2,1}^2  +  1  \in  \GFp[X_{2,1}]$  is  irreducible.
We  construct  $\GF{p^2}$  as  extension of  $\GFp$  via  this  polynomial
and use the  corresponding Steinitz  numbering  of $\GF{p^2}$.  

In the following proposition let $a  \in \GF{p^2}$ be the element that has
no square root in $\GF{p^2}$  returned by Algorithm~\ref{nonr} with inputs
$\GF{p^2}$ and $2$.

\begin{Pro}\label{defC} Recall $4 | (p+1)$.
We define polynomials in $\GFp[X_{2,i} \mid\: i \in \mathbb{Z}_{>0}]$:
\[
\begin{array}{rcl}
f_{2,1} & := & X_{2,1}^2 + 1\\
f_{2,2} & := & X_{2,2}^2 - a\\
f_{2,i} & := & X_{2,i}^2 - X_{2,i-1}
\textrm{ for } i \geq 3.
\end{array}
\]
For each $l \in \Z_{>0}$ the  sequence $((X_{2,i}, f_{2,i}), 1 \leq i \leq
l)$ defines a tower of field extensions of degree $2$ over $\GFp$.
\end{Pro}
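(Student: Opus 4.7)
The strategy is to mirror the proof of Proposition~\ref{defB}, with the 2-adic information supplied by Lemma~\ref{rparts}(c). The base cases are immediate: since $4\mid(p+1)$ forces $p\equiv 3 \mod 4$, the element $-1$ is not a square in $\GFp$, so $f_{2,1}$ is irreducible by Lemma~\ref{degrirred}(b); and $a$ is a non-square in $F_1\cong\GF{p^2}$ by construction of Algorithm~\ref{nonr}, so $f_{2,2}$ is irreducible by the same lemma.

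I would then prove by induction on $i\geq 2$ the combined statement that $f_{2,j} \mod I_{j-1}$ is irreducible for all $2\leq j\leq i$ and that the 2-part of $\mathrm{ord}(x_{2,i})$ in $F_i^\times$ equals the full 2-part of $|F_i^\times|=p^{2^i}-1$. Writing $2^t \| (p^2-1)$, the hypotheses $2\|(p-1)$ and $4\mid(p+1)$ give $t\geq 3$, and Lemma~\ref{rparts}(c) with $q=p$ and $n=2^{i-1}$ identifies that full 2-part as $2^{t+i-1}$. For the base $i=2$: since $a$ is a non-square in $F_1$, the 2-part of $\mathrm{ord}(a)$ is the full $2^t$, and from $x_{2,2}^2=a$ together with $\mathrm{ord}(x_{2,2})\in\{\mathrm{ord}(a),\,2\,\mathrm{ord}(a)\}$ (the first alternative being forbidden because $\mathrm{ord}(a)$ is even) it follows that the 2-part of $\mathrm{ord}(x_{2,2})$ is $2^{t+1}$. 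For the step $i-1\to i$ with $i\geq 3$, the hypothesis gives $x_{2,i-1}^{(|F_{i-1}|-1)/2}=-1\neq 1$, so $x_{2,i-1}$ is a non-square in $F_{i-1}$, making $f_{2,i}$ irreducible by Lemma~\ref{degrirred}(b); the same order-doubling argument as at $i=2$ then shows that the 2-part of $\mathrm{ord}(x_{2,i})$ is $2^{t+i-1}$, closing the induction.

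The main technical point is the bookkeeping that keeps $t\geq 3$ and that ensures the 2-part of $\mathrm{ord}(a)$ is the full 2-part of $|F_1^\times|$: both come directly from the hypothesis $4\mid(p+1)$ and from how $a$ was chosen, and together they force every order encountered in the induction to be even, so the order-doubling step is always valid. This is precisely why the case $r=2$, $4\mid(p+1)$ needs a construction distinct from Proposition~\ref{defB}: without the extra factor of $2$ gained by first adjoining a square root of $-1$, the 2-part invariant would not have enough room to propagate. Once these ingredients are in place, the induction is routine and parallels the odd-prime case verbatim.
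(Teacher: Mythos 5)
Your proof is correct and is essentially the paper's own argument: the paper proves Proposition~\ref{defC} by saying it is ``similar to Proposition~\ref{defB}, now using Lemma~\ref{rparts}(c)'', and your write-up supplies exactly those details — $a$ a non-square in $\GF{p^2}$ forces the $2$-part of its order to be the full $2$-part $2^t$ of $p^2-1$, taking square roots doubles the (even) order, and Lemma~\ref{rparts}(c) shows the $2$-part of $|F_i^\times|$ grows by exactly one factor of $2$ per step, so each $x_{2,i}$ stays a non-square and Lemma~\ref{degrirred}(b) applies. One minor quibble: the bound $t\geq 3$, which you call the main technical point, is true but never actually needed (the order-doubling only requires $\mathrm{ord}(a)$ even, i.e.\ $t\geq 1$); what adjoining $\sqrt{-1}$ really buys is that Lemma~\ref{rparts}(c) becomes applicable from $\GF{p^2}$ onward, whereas the statement of~\ref{rparts}(b) fails for $r=2$ when $2\,\|\,(p-1)$.
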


\begin{proof}
The   proof  is   similar   as  for   Proposition~\ref{defB},  now   using
Lemma~\ref{rparts}(c).
\end{proof}

\subsection{Case $r \neq p$, $r \nmid (p-1)$} \label{secvar2}

The idea  for this generic case  is simply to construct  relatively sparse
pseudo-random  polynomials  and  to  check them  for  irreduciblity.  From
Lemma~\ref{numirrpols} we know  that about $1/r$ of  all monic polynomials
of degree $r$ are irreducible.

In the  next algorithm  \texttt{FindIrreduciblePolynomial(K, r, a,  X)} we
assume that the argument \texttt{K} is a finite field which has a Steinitz
numbering.
The  argument   \texttt{r}  is  a  positive  integer,
\texttt{a}  is  a nonzero  element  of  \texttt{K}  and \texttt{X}  is  an
indeterminate  over  \texttt{K}.  The   function  returns  an  irreducible
monic  polynomial of  degree \texttt{r}  in the  variable \texttt{X}  over
\texttt{K}  with  constant term  \texttt{a}.  We  assume that  a  function
\texttt{IsIrreducible(K,  f)}  is  available  that  checks  if  the  monic
polynomial \texttt{f} over  \texttt{K} is irreducible. Here  is the pseudo
code:

\begin{Alg}\label{randirrpol}
\mbox{}\\
\texttt{FindIrreduciblePolynomial(K, r, a, X)}\\
\mbox{}\hspace{2ex}\texttt{q = |K|}\\
\mbox{}\hspace{2ex}\texttt{inc = minimal integer 
with $\texttt{q}^\texttt{inc} \geq 2 \texttt{r}$}\\
\mbox{}\hspace{2ex}\texttt{d = 0 \quad \textrm{(random coeffs 
up to \texttt{X}$^\texttt{d}$)}}\\
\mbox{}\hspace{2ex}\texttt{f = X$^\texttt{r}$ + X + a \textrm{(first 
polynomial to try)}}\\
\mbox{}\hspace{2ex}\texttt{count = 0}\\
\mbox{}\hspace{2ex}\texttt{while not IsIrreducible(K, f)}\\
\mbox{}\hspace{6ex}\texttt{if count modulo r = 0 then}\\
\mbox{}\hspace{10ex}(after any \texttt{r} trials we allow
\texttt{inc} more non-zero coefficients)\\
\mbox{}\hspace{10ex}\texttt{d = minimum(d+inc, r-1)}\\
\mbox{}\hspace{6ex}\texttt{s = StandardAffineShift(q$^\texttt{d-1}$, count)} 
(see~\ref{SAS})\\
\mbox{}\hspace{6ex}Let \texttt{g $\in$ K[X]} be the polynomial with Steinitz
number \texttt{s} (see~\ref{DefSteinitz}), set\\
\mbox{}\hspace{6ex}\texttt{f = X$^\texttt{r}$ + g X + a}\\
\mbox{}\hspace{6ex}\texttt{count = count + 1}\\
\mbox{}\hspace{2ex}\texttt{return f}\\
\end{Alg}

\noindent\textbf{Proof and remark.} 
The  correctness of  the  algorithm  is clear,  since  we will  eventually
run  through  all monic  polynomials  of  degree  $r$  and test  them  for
irreducibility.

The proportion of monic polynomials of degree $r$ with prescribed constant
term which is irreducible is about $1/r$ by Lemma~\ref{numirrpols}(b).

In practice,  running through the  polynomials in  the order given  by the
function \texttt{StandardAffineShift}  shows the same performance  as with
using any sophisticated random number generator.

It  is not  advisable  to run  through the  polynomials  just by  Steinitz
number.  We have  tried  this  and occasionally  found  examples where  no
irreducible polynomial was found after  very long running times. (Example:
We  tried  a  huge  number   of  polynomials  of  form  $X^{107}+bX+a  \in
\GF{2^{107}}[X]$ without finding any irreducible one. With our strategy in
\texttt{FindIrreduciblePolynomial}  we  only  try $107$  such  polynomials
first and from  then allow a non-zero  coefficient of $X^2$ and  so on. In
this case, our  irreducible polynomial has non-zero  coefficients also for
$X^3$ and $X^4$.)
{\mbox{}\hfill $\Box$} \medskip 

We sketch  a practical way  to check whether a polynomial $f \in  K[X]$ of
degree $r$, where $|K|=q$, is  irreducible. The polynomial $f$ contains an
irreducible factor of degree dividing $t$  if and only if $\gcd(f, X^{q^t}
- X)$ has positive degree. We have $\gcd(f, X^{q^t} - X) = \gcd(f, h - X)$
where  $h \equiv  X^{q^t} \mod  f$ can  be computed  by repeated  squaring
modulo $f$. So, $f$ is irreducible if  and only if $\gcd(f, X^{q^t} - X) =
1$  for $1  \leq  t  \leq r/2$.  Many  non-irreducible random  polynomials
contain a factor of small degree  which is quickly detected by this method
(see comments on Ben-Or's test in~\cite{GaoPan}).

For a speedup we precompute $(X^0)^q, X^q, \ldots, (X^{r-1})^q \mod f$ and
use that  $x\mapsto x^q$ is a  $K$-linear map to compute  $X^{q^j} \mod f$
for $j > 1$.

Now we define a tower of field  extensions of degree $r$ over $\GFp$.

\begin{Def}\label{defE}
Set   $f_{r,1}   =   $\texttt{FindIrreduciblePolynomial}($\GFp,   r,   -1,
X_{r,1}$).

Assume that a tower of field extensions of degree $r$,
\[ ((X_{r,1}, f_{r,1}), \ldots, (X_{r,i-1}, f_{r, i-1})) \]
is already defined and set $F_{i-1}  = \GFp[X_{r,j}\mid 1\leq j \leq i-1]/
(f_{r,1},  \ldots,  f_{r,i-1})$, and  write  $x_{r,i-1}$  for the  residue
class  of $X_{r,i-1}$  in  $F_{i-1}$.  

Then compute the polynomial 
\[ \texttt{FindIrreduciblePolynomial}(F_{i-1}, r, -x_{r,i-1}, X_{r,i}) \]
and  substitute the  $x_{r,j}$ with  $j <  i$ in  the coefficients  by the
representing variables $X_{r,j}$ to define $f_{r,i}$.

Note that the norm  of $x_{r,1}$ over $\GFp$ is $1 \in \GFp$  and for $i >
1$ the norm of $x_{r,i}$ over $F_{i-1}$ is $x_{r,i-1}$.
\end{Def}

\textbf{Remarks.} We have tried various methods for generating irreducible
polynomials described  in the literature. But  we did not find  any method
that  worked as  well  as  testing random  polynomials  in  general. In  a
previous version  of this paper  we had a  variant that was  described and
analysed  by  Shoup  and  is  cited  in  many  articles,  see~\cite{Shoup,
ShoupFast}. If $r \nmid (p-1)$ the idea is to first construct an extension
$\GF{p^e}$ that  does contain elements  of order  $r$, then procede  as in
case $r | (p-1)$ above, and finally  use the traces of the generators into
the fields  of order  $p^{r^i}$ as  generators of  the prime  power degree
extensions. Unfortunately, the intermediate degree  $e$ can be as large as
$r-1$ and  then one needs  to compute temporarily  in a much  bigger field
than one  wants to construct. Despite  some efforts we could  not get this
method sufficiently  efficient in  practice. (And,  of course,  the method
in~\ref{secvar2} is much easier to describe and implement.)

On  the other  hand, why  don't we  simplify our  description further  and
use~\ref{secvar2} for all $r$? Here, in  the special cases it is very easy
to just right down  polynomials we want, and for $r =  2$ always and other
small $r$ often  one of the special cases applies.  In our practical tests
the  special  cases  yield  a  noticable  speedup  compared  to  searching
pseudo-random irreducible polynomials for all $r$.

\section{Definition of standard generators of cyclic subgroups}
\label{secdefcyc}

In  this  section  we  define  explicit  generators  $y_{n,r}$  of  cyclic
subgroups  of order  $r^t$ where  $r$ is  prime with  $r^t ||  (p^n-1)$ as
described in Remark~\ref{defcyclic}.

We describe the elements $y_{n,r}$ as output of an algorithm.

The  construction  is  relative   to  some  standardized  construction  of
$\bar\GFp$ where we can identify each  element in any finite subfield by a
Steinitz number.

In the base case of the following  algorithm we use again the numbering of
elements defined by \texttt{StandardAffineShift}, see~\ref{SAS}.

\begin{Alg} \texttt{StandardCyclicGeneratorPrimePower}$(p, n, r)$
\label{standardcyc}

Input: a prime $p$, a degree $n$ and a prime $r$ with $r | (p^n-1)$.

Output: an element $y_{n,r} \in \GF{p^n}^\times$ of order $r^t$ where $r^t
|| (p^n-1)$.

\begin{itemize}
\item[(a)] Find $t$ and minimal divisor $k | n$ such that $r^t || (p^k-1)$.
If $k < n$  then return the result of 
\texttt{StandardCyclicGeneratorPrimePower}$(p, k, r)$ as element of
$\GF{p^n}$.
\item[(b)] (Find base case)
\begin{itemize}
\item[(b1)] If $r = 2$,  $p \equiv 3 \textrm{ mod } 4$ and $2\mid n$
then set $l = 2$.
\item[(b2)] Otherwise find minimal $l|n$ with $r \mid (p^l-1)$.
\end{itemize}

\item[(c)] Base case $l = n$: 

\begin{itemize}
\item[(c1)]
Initialize \texttt{count = 0}, $x = 0 \in \GF{p^n}$
\item[(c2)]
While $x = 0$ or $x^{(p^n-1)/r} = 1$ do\\
\mbox{}$\quad\quad$ \texttt{count = count + 1}\\
\mbox{}$\quad\quad$ \texttt{s = StandardAffineShift($p^n$, count)}\\
\mbox{}$\quad\quad$ $x = $ element in $\GF{p^n}$ with Steinitz number 
\texttt{s}\\
\item[(c3)]
Return $y_{n,r} = x^{(p^n-1)/r^t}$.
\end{itemize}

\item[(d)] Case $l < n$: 

We need  to find  a generator  which is  compatible with  the choice  in a
proper subfield.

In this case $r \mid n$ and $r^{t-1} || (p^{(n/r)}-1)$, see~\ref{rparts}.

Return the Steinitz-smallest $r$-th root of $y_{n/r, r}$.
\end{itemize}
\end{Alg}

\begin{Rem}\label{remcyc}
Note that for  $r = 2$ and $p \equiv  3 \textrm{ mod } 4$ in  case $n = 1$
step~(c) will always  return $-1 \in  \GFp$;  and for $n =  2$ any element
found in the base  case will be automatically compatible with  the $n = 1$
case.

The time critical  case in this algorithm is step~(d).  A practical method
for this  step is  to first power  up random elements  to find  an element
$y$  of  order  $r^t$.  Then  compute  the  discrete  logarithm  $b$  such
that  $(y^r)^b =  y_{n/r,  r}$. This  can be  done  by the  Pohlig-Hellman
algorithm~\cite{PoHe} which  involves $t-2$ searches through  $r$ elements
(which can  be optimized via  Shanks' algorithm).  Now $y^b$ is  an $r$-th
root of $y_{n/r, r}$. Finally multiply  $y^b$ with all $r$-th roots of one
(these are  the elements $(y^{r^{t-1}})^i$,  $0 \leq i  < r$) to  find the
$r$-th root  of $y_{n/r,r}$ with  the smallest Steinitz number.  Note that
the primes $r$  for which this case  occurs are divisors of  the degree of
the field over its prime field, and so loops of length $r$ are acceptable.

Similar  to  former  remarks  we  have  again  noticed  that  the  use  of
\texttt{Standard\-Affine\-Shift} in the base case~(c) behaves similar to a
randon choice of elements to  try. More systematic strategies, like trying
elements by ascending or descending Steinitz number lead to cases where no
result was returned after long running times.

\end{Rem}

\section{Remarks on implementation}

The  main  purpose   of  this  article  is  to   describe  a  standardized
construction of finite fields and  standardized generators of their cyclic
subgroups which works in practice and could be adopted by various software
packages dealing with finite fields.

Therefore,   we  publish   at   the   same  time   as   this  article   an
implemention  of  our constructions  as  a  GAP~\cite{GAP} package  called
StandardFF~\cite{StandardFF}.

\subsection{Implementation  of  the  standardized  extensions  in  other
programs}

One could implement  $\bar\GFp$, or its subfield   $\GF{p^n}$, directly as
explained in Section~\ref{secbarFp}  and Remark~\ref{remcomputetower} such
that the elements are represented by multivariate polynomials in variables
$X_{r,i}$ for primes $r$ with $r^i | n$. But this is not an efficient
representation for computations.

Instead  we construct  $\GF{p^n}$ as  simple extension  $\GFp[x_n]$, where
$x_n$ is the primitive element that we have defined in~\ref{rembargfp}(e),
together  with an  $(n \times  n)$-matrix  whose $i$-th  row contains  the
coefficients of  $x_n^{i-1}$ expressed in  the tower basis  of $\GF{p^n}$.
(The inverse of this transition matrix expresses the elements of the tower
basis as linear combination of the basis $(1, x_n, \ldots, x_n^{n-1})$.)

We compute  this recursively. In the  case $n=1$ we represent  $\GFp \cong
\mathbb{Z}/p\mathbb{Z}$  and $(1)$  is  the natural  basis  and the  tower
basis. Let $n  > 1$, $r$ the largest  prime divisor of $n$, $m  = n/r$ and
$r^t  n'=  n$ with  $\gcd(n',r)  =  1$. We  assume  that  we have  already
constructed  $\GF{p^m} =  \GFp[x_m]$ together  with the  transition matrix
from  the tower  basis to  the basis  $(1, x_m,  \ldots, x_m^{m-1})$.  The
generator  $x_{n'}$  of the  field  $\GF{p^{n'}}$  is  an element  of  the
tower basis  of $\GF{p^m}$.  Let $f_{r,t}(X_{r,t})$  be the  polynomial of
degree $r$  defined in  Section~\ref{secstdgf}. We  can write  the residue
classes  of  the coefficients  of  $f_{r,t}$  as elements  of  $\GF{p^m}$,
using  the  transition matrix  from  the  power  basis of  $\GF{p^m}$  and
get  $\bar{f}_{r,t}(X_{r,t})  \in  \GF{p^m}[X_{r,t}]$.  Now  consider  the
field  $\GF{p^m}[X_{r,t}]/(\bar{f}_{r,t}) \cong  \GF{p^n}$. Our  primitive
element  is $x_n  =  x_{n'}  x_{r,t}$ (as  in  earlier  sections we  write
$x_{r,t}$ for  the residue class of  $X_{r,t}$). If $(b_0,\ldots,b_{m-1})$
is   the   tower  basis   of   $\GF{p^m}$   then  the   concatenation   of
$(b_0x_{r,t}^i,\ldots,b_{m-1}x_{r,t}^i)$ for $i = 0, 1,\ldots, r-1$ is our
tower basis of $\GF{p^n}$.

It is now straight forward to  express the elements $1, x_n, x_n^2, \ldots
x_n^{n-1}$ in this tower basis  (whenever multiplication by $x_n$ leads to
a term containing  $x_{r,t}^r$ we substitute this by  a linear combination
of lower powers of $x_{r,t}$ using $\bar{f}_{r,t}$ (here it is useful that
our $\bar{f}_{r,t}$ are often sparse).

The  next power  $x_n^n$  can be  written als  linear  combination of  the
previous ones and  this yields the minimal polynomial  $f_n(X_n)$ of $x_n$
in  $\GFp[X_n]$. So,  $\GF{p^n} =  \GFp[X_n]/(f_n(X_n))$ and  we have  the
transisition matrix from the powers of $x_n$ to the tower basis.

Our software  package supports  various representations  of elements  in a
field  $\GF{p^n}  = \GFp[X_n]/(f_n(X_n))$:  as  polynomials  in $X_n$,  as
coeffient  vectors  with  respect  to the  tower  basis,  as  multivariate
polynomials  as in  Remark~\ref{rembargfp}(a), as  Steinitz numbers  or as
Steinitz pairs, see~\ref{rembargfp}(c). There are functions to compute our
standardized generators of cyclic subgroups  and embeddings of fields. The
arithmetic  of elements  in different  fields is  also supported  by first
mapping the operands into a common larger field.

Our  implementation   only  uses  arithmetic  of   univariate  polynomials
(represented as coefficient lists) and  contains an irreducibility test as
mentioned  after~\ref{randirrpol}. We  compute  minimal  polynomials of  a
field  element by  computing  its action  on some  basis  and the  minimal
polynomial of the corresponding matrix.

In further systematic tests we considered the finite fields of order $p^n$
in the following ranges:

\begin{itemize}
\item $1 \leq n \leq 2000$ for $p = 2,3,5,7$
\item $1 \leq n \leq 500$ for $10 < p < 100$
\item $1 \leq n \leq 100$ for $100 < p < 10000$
\end{itemize}

This includes all 10800 cases for which we know the Conway polynomial. Due
to  decades long  (and  ongoing) enormous  computational  efforts to  find
factors of  numbers of the  form $a^n \pm 1$,  see~\cite{BrentFactors}, we
know  the factorization  of $p^n-1$  for 112968  fields in  the considered
range (May 2022). These are the only  fields for which we can hope to find
(standardized) primitive roots (otherwise we cannot determine the order of
an element in the field).

Our programs can construct all of  these 112968 fields $\GF{p^n}$ in about
7 hours  and it can find  all the standardized primitive  roots $y_{p^n-1}
\in  \GF{p^n}$ as  described  in Remark~\ref{defcyclic}  in additional  21
hours. The minimal polynomials of these $y_{p^n-1}$ over their prime field
form a substitute  for the Conway polynomials with  the same compatibility
properties. Computing  the $y_{p^n-1}$ and their  minimal polynomials just
for  the fields  where we  know the  Conway polynomial  takes less  than 2
minutes (while the  original computations of the  known Conway polynomials
involved many years of CPU time).

It is also  possible to construct many fields outside  the mentioned range
(larger degree or  much larger characteristic). The hard cases  are when a
large  prime  divides  the  degree.  

\subsection{Computing embeddings}

Embeddings    are     easily    computed    via    the     tower    bases,
see~\ref{DefTowerBasis}. The  ordered tower  basis of  $\GF{p^n}$ contains
the tower basis of each subfield as subsequence. The list of degrees (over
the prime field) of the tower basis elements can be generated as follows:

If  $n =  1$ it  is $(1)$.  For $1  < n  = r_1^{l_1}\cdots  r_k^{l_k}$ let
$(d'_1, \ldots, d'_{n/r_k})$ be the list  of degrees of the tower basis of
$\GF{p^{n/r_k}}$   (these are  the  first $n/r_k$  elements  of the  tower
basis  of $\GF{p^n}$).  Then we  get the  degrees for  the tower  basis of
$\GF{p^n}$  by appending  $(r-1)$ times  $(\lcm(d'_1, r_k^{l_k}),  \ldots,
\lcm(d'_{n/r_k}, r_k^{l_k}))$.

Let $(b_1,  \ldots, b_n)$ be  the tower  basis of $\GF{p^n}$  with degrees
$(d_1,\ldots,  d_n)$. Let  $m \mid  n$. Then  the subsequence  $(b_j\mid\;
d_j|m)$ is the tower basis of $\GF{p^m}$.

Let $x =  \sum_{i=1}^n a_i b_i \in \GF{p^n}$ ($a_i  \in \GFp)$, written in
the tower  basis. Then the degree  of $x$ over $\GFp$  is $\lcm\{d_j\mid\;
a_j \neq 0\}$.

\section{Application to Brauer character tables}\label{sec:brauer}

Let $G$ be a finite group, $K$ be an algebraically closed field and $n \in
\Z_{>0}$. A group homomorphism $\rho:  G \to \textrm{GL}_n(K)$ is called a
representation.  If  $K$  has  characteristic $0$  then  much  information
about  $\rho$ is  encoded in  its character  $\chi: G  \to K$,  $g \mapsto
\textrm{Trace}(\rho(g))$,  a  function  which  is  constant  on  conjugacy
classes. The $\textrm{Trace}$ is the sum  of the eigenvalues of the matrix
in $K$.

If $K$ has  finite characteristic $p$ then the character  as defined above
contains much less information about $\rho$ (for example in characteristic
$0$ we have $\chi(1) = n$ but in characteristic $p$ only $\chi(1) = n \mod
p \in \GFp$). Instead we use \emph{Brauer characters} $\tilde{\chi}$ which
are defined on the elements $g \in G$ of order not divisible by $p$. Here,
$\tilde{\chi}(g)$  is the  sum of  the \emph{lift}  of the  eigenvalues of
$\rho(g)$ to  complex roots  of unity.  Such a lift  is an  isomorphism $e
\circ \ell: \bar\GFp^\times  \to \mu_{p'} \subseteq \C^\times$  as we have
considered  in  Section~\ref{secembedding}. In  general  the  values of  a
Brauer character depend on the chosen lift.

A large collection of Brauer characters is contained in the GAP~\cite{GAP}
character table  library CTblLib~\cite{CTblLib} which includes  all Brauer
characters from the Modular Atlas~\cite{ModAtl}. The values are given with
respect to a lift defined by the Conway polynomials: if $f(X) \in \GFp[X]$
is the Conway  polynomial defining the field with $p^n$  elements then the
lift  restricted to  $\GF{p^n}  \cong \GFp[X]/(f)$  is  defined by  $X+(f)
\mapsto  \exp(2 \pi  i  / (p^n-1))$.  More  details can  be  found in  the
Introduction and Appendix~1 of the Modular Atlas~\cite{ModAtl}.

Our definition of standard generators $y_m$  of cyclic groups of order $m$
in~\ref{defcyclic} and~\ref{standardcyc} yields  another lift, where $y_m$
is mapped to $\exp(2 \pi i / m)$ for all $m \in \Z_{>0}$.

How can we recompute the values of known Brauer characters with respect to
the  new lift  defined  here? The  first  step is  to  compute the  lifted
eigenvalues  from the  character  tables.  This can  be  done because  the
mentioned Brauer character tables contain the power maps of the group (for
each element  one knows the  conjugacy class of  all its powers)  and this
yields a  Vandermonde type system  of equations for the  multiplicities of
the eigenvalues. If the relevant Conway  polynomial is known we can derive
the corresponding  eigenvalues in  characteristic $p$  as elements  in the
finite fields defined by Conway polynomials.

The missing step  to compute the image of these  eigenvalues under our new
lift is an  identification of the elements in  Conway polynomial generated
fields  with  elements in  the  algebraic  closure $\bar\GFp$  constructed
in~\ref{secbarFp}.

Let  $n|m$ and  $f,g \in  \GFp[X]$ be  the Conway  polynomials of  degrees
$n$   and  $m$,   respectively.  Then   $\GFp[X]/(f)$  is   considered  as
subfield  of $\GFp[X]/(g)$  by mapping  $X+(f)$ to  $(X+(g))^a$ with  $a =
(p^m-1)/(p^n-1)$. We  define embeddings  of the fields  $\GFp[X]/(f)$ into
our $\bar\GFp$ which commute with these inclusion relations:

\begin{Def} \label{identConway}
We define the embedding by induction over the degree of the field.

Since  any field  homomorphism  maps $1  \mapsto  1$ it  is  clear how  to
identify the zero $z_1$ of a Conway  polynomial of degree $1$ in the prime
field. Now  let $n > 1$  and $f \in  \GFp[X]$ be the Conway  polynomial of
degree $n$.  Assume that for  any proper divisor $m  | n$ we  have already
defined the image  $z_m \in \bar\GFp$ of  the residue class $X  + (g)$ for
the Conway polynomial $g$ of degree $m$.

Then we map  $X + (f)$ to the  zero $z_n$ of $f$ in our  standard field of
order $p^n$  which has the  smallest Steinitz  number among the  zeros $z$
which fulfill the compatibility conditions $z^{(p^n-1)/(p^m-1)} = z_m$ for
all proper divisors $m$ of $n$.
\end{Def}

Our  software  package  StandardFF~\cite{StandardFF} contains  a  function
\texttt{Steinitz\-Pair\-Conway\-Generator}  which  computes  the  Steinitz
pairs decribing the elements $z_n$ in Definition~\ref{identConway}.

To compute  the image  of $z_n$  under our new  lift we  have to  find the
discrete  logarithm  $e$  such  that  $y_{p^n-1}^e =  z_n$.  This  can  be
challenging in  large fields,  but in  practice we  usually only  need the
image of powers of $z_n$ of small order which can be found much easier.

Our   package    StandardFF~\cite{StandardFF}   also   has    a   function
\texttt{Standard\-Values\-Brauer\-Character}  which  recomputes values  of
Brauer  characters with  respect to  our new  lift, provided  the relevant
Conway polynomials are known.

We  consider two  explicit examples:  The  Brauer character  table of  the
largest  sporadic  simple  group,  the  Monster,  in  characteristic  $19$
contains several  Brauer characters  for which  our function  to recompute
their values according  to our new lift fails (because  some needed Conway
polynomials are not  known and essentially impossible to  compute). But in
this case one can check that with any irreducible Brauer character all its
Galois conjugate  class functions are also  irreducible Brauer characters.
In such a case  the Brauer characters are the same for  any lift, only the
map  from a  set of  concrete  representations to  their Brauer  character
depends on the lift (and here  we cannot compute the permutation of Brauer
characters caused by the different lifts).

Thomas    Breuer   systematically    determined   all    cases   in    the
CTblLib~\cite{CTblLib}  library  where  we  cannot  recompute  the  Brauer
character values for  our new lift because of  missing Conway polynomials,
or  where a  complex  character value  cannot be  reduced  modulo a  prime
dividing the group order. This concerns about $50$ finite fields for which
the Conway polynomial would be  needed. With the construction described in
this paper  our software only  needs 3  seconds to construct  these fields
including our standardized primitive roots.

As second  explicit example we mention  the Brauer character table  of the
alternating group $A_{18}$ in characteristic $3$. In this case we are able
to recompute all  values with respect to  our new lift. It  turns out that
for each of the degrees $6435$  and $73645$ there are two characters where
the  new lift  yields  class  functions which  are  not  contained in  the
original table.  So, the  two different lifts  actually lead  to different
Brauer character tables.

Finally,  we  want  to  illustrate  another  interesting  feature  of  the
constructions in this  paper. Say, we have a group  element of order $523$
and we want  to lift $523$-th roots of unity  in characteristic $13$. Then
the smallest field containing such  roots of unity is $\GF{13^{261}}$. The
factorization of  $13^{261}-1$ is not known  and so probably very  hard to
compute. So, even with our new definition we have no chance to compute our
standardized primitive root  of this field. Nevertheless,  for our purpose
we only need  to construct the field of order  $13^{261}$ and our standard
generator of order  $523$ in this field.  Our programs can do  this in 0.4
seconds.

\sloppy
\printbibliography
\end{document}